\documentclass[12pt,reqno]{amsart}

\usepackage[T1]{fontenc}
\usepackage[utf8]{inputenc}
\usepackage{lmodern}
\usepackage{microtype}
\usepackage{amsmath,amssymb,amsthm,mathtools}
\usepackage{aliascnt}
\usepackage{enumitem}
\usepackage{xcolor}
\usepackage{hyperref}
\usepackage[nameinlink,capitalise,noabbrev]{cleveref}
\usepackage[margin=1.25in]{geometry}
\usepackage{tikz}
\usetikzlibrary{arrows.meta,decorations.pathreplacing}

\makeatletter
\def\thmhead@plain#1#2#3{%
  \thmname{#1}\thmnumber{\@ifnotempty{#1}{ }\@upn{#2}}%
  \thmnote{ \the\thm@notefont(#3)}}
\let\thmhead\thmhead@plain
\makeatother

\hypersetup{
  colorlinks=true,
  linkcolor=blue!50!black,
  citecolor=blue!50!black,
  urlcolor=blue!50!black,
  pdftitle={Counterexamples for lacunary dilates via dyadic spike blocks},
  pdfauthor={Boon Suan Ho}
}

\newtheorem{theorem}{Theorem}[section]
\newaliascnt{proposition}{theorem}
\newtheorem{proposition}[proposition]{Proposition}
\aliascntresetthe{proposition}
\newaliascnt{lemma}{theorem}
\newtheorem{lemma}[lemma]{Lemma}
\aliascntresetthe{lemma}
\newaliascnt{corollary}{theorem}
\newtheorem{corollary}[corollary]{Corollary}
\aliascntresetthe{corollary}
\newaliascnt{definition}{theorem}
\newtheorem{definition}[definition]{Definition}
\aliascntresetthe{definition}
\theoremstyle{remark}
\newaliascnt{remark}{theorem}
\newtheorem{remark}[remark]{Remark}
\aliascntresetthe{remark}

\newcommand{\T}{\mathbb T}
\newcommand{\Z}{\mathbb Z}
\newcommand{\R}{\mathbb R}

\newcommand{\1}{\mathbf 1}

\newcommand{\norm}[1]{\left\lVert #1\right\rVert}

\newcommand{\val}{\operatorname{\nu}_2}

\numberwithin{equation}{section}

\crefname{theorem}{Theorem}{Theorems}
\crefname{proposition}{Proposition}{Propositions}
\crefname{lemma}{Lemma}{Lemmas}
\crefname{corollary}{Corollary}{Corollaries}
\crefname{definition}{Definition}{Definitions}
\crefname{remark}{Remark}{Remarks}

\begin{document}

\title[Dyadic spike blocks for lacunary dilates]
{Counterexamples for lacunary dilates\\ via dyadic spike blocks}
\author{Boon Suan Ho}
\address{Department of Mathematics, National University of Singapore}
\email{\href{mailto:hbs@u.nus.edu}{hbs@u.nus.edu}}
\date{}

\subjclass[2020]{Primary 42A55; Secondary 42A16, 42A61, 37A30, 60F15}

\keywords{lacunary dilates, lacunary averages, Fourier-tail conditions,
almost everywhere convergence, sweeping out,
large partial sums, $L^p$ bounds, strong laws}

\begin{abstract}
We construct dyadic lacunary counterexamples for two problems of Erd\H{o}s on
pointwise behavior of dilates on the circle.  The main device is a
dyadic spike block: rare positive spikes create long positive runs in
the lacunary averages, while a deterministic lower floor prevents cancellation
from the remaining stages.

The endpoint construction gives a mean-zero
\(f\in\bigcap_{1\le q<\infty}L^q(\T)\) and a sequence \(n_j=2^{m_j}\),
\(n_{j+1}/n_j\ge2\), such that
\[
        \norm{f-S_Nf}_2\ll (\log\log N)^{-1/2},
        \qquad
        \limsup_{N\to\infty}
        \frac1N\sum_{j\le N}f(n_jx)=+\infty
\]
for almost every \(x\).  Thus Matsuyama's positive theorem at exponent
\(c>1/2\) cannot be extended to the endpoint \(c=1/2\), and Erd\H{o}s Problem
\#996 has a negative answer.  A second choice of parameters gives, for every
\(2\le p<\infty\), functions \(f\in L^p(\T)\) with
\[
        \limsup_{N\to\infty}
        \frac{\sum_{j\le N}f(n_jx)}
             {N(\log N)^{1/p-\varepsilon}}
        =+\infty
        \qquad(\varepsilon>0)
\]
almost everywhere; the case \(p=2\) answers Erd\H{o}s Problem \#995.
We also include a bounded small-set companion construction.
\end{abstract}

\maketitle

\vspace{-1em}
\tableofcontents

\section*{Notation}
\begin{tabular}{@{}p{0.20\textwidth}p{0.74\textwidth}@{}}
$\T$ & the circle $\R/\Z$, identified with $[0,1)$ when convenient \\
$\widehat f(m)$ & the $m$th Fourier coefficient of $f$ \\
$S_Nf$ & symmetric Fourier partial sum \\
$\norm{\cdot}_2$ & the $L^2(\T)$ norm \\
$\val(r)$ & dyadic valuation of a nonzero integer $r$ \\
$\phi_d, h_d, -g_d$ & dyadic spike of depth $d$, its maximum, and its minimum \\
$\lambda_k$ & squared $L^2$ cost of the $k$th block \\
$B_k$ & target signal height at stage $k$ \\
$F_k$ & spike block added at stage $k$ \\
$L_k,d_k,D_k,U_k$ & number of layers, depth, spacing, and base shift of $F_k$ \\
$T_k$ & number of trials run at stage $k$ \\
$\mathcal I_{k,t}$ & exponents inserted by $t$th trial of stage $k$ \\
$M_{k,t},\ell_{k,t}$ & start and length of the $t$th trial at stage $k$ \\
$P_{k,t},N_{k,t}$ & number of selected exponents before, and at the end of, a trial \\
$N_k^*$ & number of selected exponents after stage $k$ \\
$Q_k$ & Fourier threshold attached to $F_k$ \\
$\mathcal V_k$ & dyadic valuation bands used by $F_k$ \\
$\Omega_k$ & largest bit coordinate used by good-trial events through stage $k$
\end{tabular}

\section{Introduction}

Let $\T=\R/\Z$ with normalized Lebesgue measure.  For $f\in L^2(\T)$ write
\[
        S_N f(x)=\sum_{|m|\le N}\widehat f(m)e^{2\pi imx}
\]
for the symmetric Fourier partial sum.  An increasing integer sequence
$n_1<n_2<\cdots$ is \emph{lacunary} if $n_{j+1}\ge \rho n_j$ for some $\rho>1$ and all
$j$.  We will construct dyadic lacunary sequences $n_j=2^{m_j}$ in this paper, so that
$n_{j+1}/n_j\ge2$.

Erd\H{o}s asked in \cite{Erdos1964} whether a very weak Fourier-tail condition
of the form
\[
        \norm{f-S_Nf}_2\ll (\log\log\log N)^{-C}
\]
forces the lacunary averages $N^{-1}\sum_{j\le N}f(n_jx)$ to converge almost
everywhere for every lacunary sequence $(n_j)$.  This is Erd\H{o}s Problem
\#996 in Bloom's list \cite{Bloom996}.  Earlier positive results used stronger
conditions: Kac--Salem--Zygmund \cite{KacSalemZygmund} assumed logarithmic decay,
Erd\H{o}s \cite{Erdos1949} assumed double-logarithmic decay with exponent
$c>1$, and Matsuyama \cite{Matsuyama1966} reached
\[
        \norm{f-S_Nf}_2\ll (\log\log N)^{-c},\qquad c>1/2.
\]
Raikov's theorem covers the special case of exact geometric dilates $n_k=a^k$;
see \cite{Raikov1936}.  For classical and modern background on lacunary
series and systems of dilated functions, see
\cite{Gaposhkin1966,AistleitnerBerkesTichy2024,AistleitnerBerkesSeip2015,AistleitnerBerkesSeipWeber2015}.

Erd\H{o}s also asked about the largest possible almost-sure order of partial sums
\[
        \sum_{j\le N} f(n_jx)
\]
for $f\in L^2(\T)$ and lacunary $(n_j)$.  His examples gave lower bounds
of order
\[
        N(\log\log N)^{1/2-\varepsilon},
\]
while his general upper bound had size $N(\log N)^{1/2+\varepsilon}$
\cite{Erdos1949,Erdos1964}.  In particular he asked whether
\[
        \sum_{j\le N} f(n_jx)=o\bigl(N\sqrt{\log\log N}\bigr)
\]
must hold almost everywhere.  This is Erd\H{o}s Problem \#995 in Bloom's list
\cite{Bloom995}.

The present paper gives negative answers to both problems.

\subsection{Main results}

The central result is the endpoint Fourier-tail counterexample.

\begin{theorem}[Endpoint Fourier-tail counterexample]\label{thm:endpoint}
There exist a real-valued mean-zero function $f\in\bigcap_{1\le p<\infty}L^p(\T)$ and a lacunary
integer sequence $(n_j)$ with $n_{j+1}/n_j\ge2$ such that
\begin{equation}\label{eq:endpoint-tail}
        \norm{f-S_Nf}_2\ll (\log\log N)^{-1/2}
\end{equation}
for all sufficiently large $N$, while
\begin{equation}\label{eq:endpoint-divergence}
        \limsup_{N\to\infty}\frac{1}{N}\sum_{j\le N}f(n_jx)=+\infty
\end{equation}
for almost every $x\in\T$.
\end{theorem}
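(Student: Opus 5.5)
We build $f=\sum_k F_k$ as a sum of dyadic spike blocks and, at the same time, build the sequence $n_j=2^{m_j}$ in stages: the $k$th stage contributes $F_k$ to the function and a run of new exponents to the sequence. The basic object is a dyadic spike $\phi_d$ of depth $d$, a mean-zero function built from the binary digits of $x$. It equals a large value $h_d\approx 2^{d}$ on a dyadic interval of measure $2^{-d}$ and is bounded below by $-g_d$ with $g_d=O(1)$, for instance $\phi_d=2^d\1_{[0,2^{-d})}-1$. The key feature is that $\phi_d(2^m x)$ depends only on the bits $m+1,\dots,m+d$ of $x$. Consequently, dilates by exponents spaced at least $d$ apart are independent Bernoulli-type variables.

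The block $F_k$ will be a normalized sum of $L_k$ such spikes placed at frequency shifts $U_k, U_k+D_k,\dots$, scaled so that its squared $L^2$ cost is $\lambda_k$. Here $\lambda_k$ is chosen with $\sum_{k\ge K}\lambda_k\asymp(\log\log N)^{-1}$ when $Q_{K}\le N<Q_{K+1}$, where $Q_k$ is the Fourier threshold above which all frequencies of $F_j$, $j\ge k$, lie. This choice gives \eqref{eq:endpoint-tail}. The tail bound needs the $F_k$ to be nearly orthogonal and supported in essentially disjoint frequency bands. Spikes are not trigonometric polynomials, so I would either use smoothed spikes or control the cross-frequency leakage through the decay of the Haar-type Fourier coefficients. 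Membership in every $L^q$ comes from the fact that the negative parts are uniformly bounded, while the positive parts have $\norm{F_k}_q$ summable for each fixed $q$ because the depths grow slowly.

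For the divergence \eqref{eq:endpoint-divergence}, stage $k$ runs $T_k$ trials. The $t$th trial appends a run of $\ell_{k,t}$ exponents $\mathcal I_{k,t}$, chosen so that for $x$ in a good-trial event of probability about $2^{-d_k}$ a single spike of $F_k$ fires on every dilate in the run. The run length is much larger than the number $P_{k,t}$ of exponents already selected, so the average over the run reaches height about $B_k\to\infty$. The good-trial events for different trials use disjoint bit windows, below $\Omega_k$, so they are independent. Taking $T_k\gg 2^{d_k}$ and applying Borel--Cantelli then shows that almost every $x$ has a good trial at infinitely many stages.

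The main obstacle is keeping the other blocks $F_j$, $j\ne k$, evaluated along the same run from cancelling the positive signal. This is where the deterministic lower floor enters. Each spike is bounded below by $-g_d$, and the normalization makes $\sum_j \inf F_j$ bounded below by an absolute constant, since the negative parts carry tiny mass. So the remaining stages contribute at least $-C$ pointwise to every average, and we need no probabilistic cancellation estimate. The averages up to $N_{k,t}$ are then at least $B_k-C-o(1)$, which tends to infinity along good trials. The delicate bookkeeping is the balance between three requirements. First, $B_k$ must tend to infinity. Second, the $L^2$ cost $\lambda_k$ scales like $B_k^2 2^{-d_k}$ per unit depth, which must be reconciled with the $\log\log$ growth of the thresholds $Q_k$, since these grow like $2^{U_k}$ and the exponents must be doubly exponential in the stage. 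Third, $T_k 2^{-d_k}$ must diverge. I would first try $d_k\approx\log k$ and $B_k\approx\log k$ and verify that these parameters satisfy all three constraints at the endpoint exponent $1/2$.
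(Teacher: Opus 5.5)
Your architecture matches the paper's: spike blocks with $2^{d_k}\asymp B_k^2L_k/\lambda_k$, runs that dominate everything selected before them, disjoint bit windows, and the deterministic floor $\inf F_j\ge -C\lambda_j/B_j$ with $\sum_j\lambda_j/B_j<\infty$. The gap is the probabilistic balance at the end, which cannot be realized, and that balance is exactly where the endpoint lives.

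For one hit to be counted on all $\ell$ dilates of a run, the block needs at least $\ell$ layers, so $L_k\ge\ell_{k,T_k}$. Since each run must dominate all earlier exponents, run lengths grow geometrically. Hence $L_k$ is exponential in the total number of trials run so far; in particular $L_k\ge a^{T_k}$ for some $a>1$. Then $2^{d_k}\gg B_k^2L_k/\lambda_k$ is exponential in $T_k$, which has three consequences:
\begin{enumerate}[label=(\roman*)]
\item $d_k\approx\log k$ is impossible;
\item $T_k2^{-d_k}\to0$;
\item in fact $\sum_kT_k2^{-d_k}<\infty$, so with your single-window good event of probability $2^{-d_k}$ the first Borel--Cantelli lemma leaves almost every $x$ with only finitely many good trials.
\end{enumerate}
Counting the good event correctly, as the union over the $\asymp L_k$ central layers, raises the per-trial probability to $\asymp\lambda_k/B_k^2$. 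You still cannot make stage success likely: that needs $T_k\gg B_k^2/\lambda_k$, whereas the endpoint forces $T_k\ll1/\lambda_k$. To see this, take $N=2^{U_k+L_kD_k/2}$. About half the layers of $F_k$ then have all their frequencies above $N$, so $\norm{f-S_Nf}_2^2$ is at least about $\lambda_k/2$. Meanwhile $\log\log N\ge\log L_k-O(1)\gg T_k$, so the tail bound forces $T_k\ll1/\lambda_k$. Consequently $\Pr(S_k)\ll T_k\lambda_k/B_k^2\ll B_k^{-2}\to0$.

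The missing idea is to accept a small stage-success probability and get divergence from independence across stages. The paper takes $T_k=\lceil\Gamma/\lambda_k\rceil$ trials, whose good events (some central layer fires) have probability $\ge c_0\lambda_k/B_k^2$. Thus $\Pr(S_k)\ge1-\exp(-c_0\Gamma/B_k^2)\gg B_k^{-2}$. With $B_k\to\infty$ slowly and $\sum_kB_k^{-2}=\infty$ (e.g.\ $B_k=100+\sqrt{\log(k+2)}$; your $B_k\approx\log k$ would also do), the second Borel--Cantelli lemma for the independent stage events gives infinitely many successful stages.

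The same choice $T_k\asymp1/\lambda_k$ makes $\log\log Q_k\asymp1/\lambda_k$, the self-consistent relation behind the exponent $1/2$. Note that $Q_k$ is determined by $\lambda_k$ through $T_k$, so you cannot fit $\lambda_k$ to a prescribed threshold sequence. You also need geometric decay of $\lambda_k$, and a separation condition such as $\lambda_kQ_k\ge2^k(1+\sum_{i<k}\lambda_iQ_i)$ to control the tails $\lambda_iQ_i/N$ of earlier blocks. Finally, the depths do not grow slowly; they grow like $1/\lambda_k$. Membership in every $L^q$ comes instead from $\norm{F_k}_q^q\ll_q\lambda_kB_k^{q-2}$ (Rosenthal), together with choosing $\lambda_k$ small relative to $B_k$.
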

\begin{proof}
\hyperlink{proof.thm.endpoint}{See Section~\ref*{sec:endpoint}.}
\end{proof}

For completeness we also record a broad bad-modulus consequence.  The endpoint
theorem implies it as a corollary.

\begin{definition}\label{def:admissible}
Let $N_0\ge e^e$, and let
$\omega\colon[N_0,\infty)\to(0,\infty)$ be decreasing.  We call $\omega$
admissible if
\begin{equation}\label{eq:admissible}
        A^{1/2}\omega\!\left(\exp\!\left(\exp(2A\log A)\right)\right)
        \longrightarrow \infty\qquad (A\to\infty).
\end{equation}
The constant $2$ is inessential.  Indeed, changing it to any fixed
$\kappa>0$ gives an equivalent condition, by monotonicity of $\omega$ and
the change of scale $B\asymp \kappa A/2$.
\end{definition}

\begin{corollary}[Bad admissible moduli]\label{cor:admissible}
Let $\omega$ be admissible.  Then there exist a real-valued mean-zero
$f\in\bigcap_{1\le p<\infty}L^p(\T)$ and a lacunary integer sequence $(n_j)$ with $n_{j+1}/n_j\ge2$
such that
\[
        \norm{f-S_Nf}_2\ll \omega(N)
\]
for all sufficiently large $N$, while
\[
        \limsup_{N\to\infty}\frac{1}{N}\sum_{j\le N}f(n_jx)=+\infty
\]
for almost every $x\in\T$.
\end{corollary}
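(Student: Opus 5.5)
The plan is to show that every admissible $\omega$ eventually dominates the endpoint modulus $(\log\log N)^{-1/2}$. Once that is done, the function $f$ and sequence $(n_j)$ supplied by \cref{thm:endpoint} work unchanged. Their divergence statement \eqref{eq:endpoint-divergence} is exactly the required one. Their tail bound \eqref{eq:endpoint-tail} then gives $\norm{f-S_Nf}_2\ll(\log\log N)^{-1/2}\le\omega(N)$ for all large $N$. No new construction is needed.

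To compare the two moduli, I will reparametrize $N$ through $A$. The map $A\mapsto \exp(\exp(2A\log A))$ is continuous and strictly increasing for $A\ge 1$, and tends to $\infty$. So for every sufficiently large $N$ there is a unique $A=A(N)\ge1$ with
\[
N=\exp\!\bigl(\exp(2A\log A)\bigr), \qquad \log\log N = 2A\log A,
\]
and $A(N)\to\infty$ as $N\to\infty$. By \eqref{eq:admissible} there is $A_0$ such that $A^{1/2}\omega\bigl(\exp(\exp(2A\log A))\bigr)\ge 1$ for all $A\ge A_0$. Hence for all large $N$ we get $\omega(N)\ge A(N)^{-1/2}$.

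It remains to compare $A^{-1/2}$ with $(2A\log A)^{-1/2}$. For $A\ge e$ we have $2\log A\ge 1$, and therefore $A^{-1/2}\ge (2A\log A)^{-1/2}=(\log\log N)^{-1/2}$. Combining this with the previous step gives $\omega(N)\ge(\log\log N)^{-1/2}$ for all sufficiently large $N$, which is what the first paragraph needed.

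There is essentially no obstacle here. The only point needing care is that $N\mapsto A(N)$ is well defined for all large $N$, so that admissibility, which is stated along the special sequence of arguments $\exp(\exp(2A\log A))$, really controls $\omega(N)$ at every large $N$. Since $A$ ranges over a continuum rather than the integers, every large $N$ is hit exactly, so monotonicity of $\omega$ is not even needed for this step. If $A$ were restricted to integers, one would instead interpolate between consecutive values, using that $\omega$ is decreasing and that $\log\log N$ changes by only a bounded factor between consecutive values of $A$.
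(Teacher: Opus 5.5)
Your proposal is correct and follows the paper's route: \cref{lem:admissible-dominates} shows that an admissible $\omega$ eventually dominates $(\log\log N)^{-1/2}$, and the corollary is then read off from \cref{thm:endpoint} unchanged. The difference is cosmetic. The paper takes integer $A$, sandwiches $N$ between $\exp(\exp(A\log A))$ and $\exp(\exp((A+1)\log(A+1)))$, and uses that $\omega$ is decreasing. You instead let $A$ range over the reals so that every large $N$ is hit exactly, and your closing remark correctly covers the integer reading of \eqref{eq:admissible}.
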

\begin{proof}
\hyperlink{proof.cor.admissible}{See Section~\ref*{sec:fourier-conseq}.}
\end{proof}

\begin{corollary}[Negative answer to Erd\H{o}s Problem \#996]\label{cor:996}
For every fixed $C>0$ there exist a real-valued mean-zero function
$f\in\bigcap_{1\le p<\infty}L^p(\T)$ and a lacunary integer sequence $(n_j)$ with $n_{j+1}/n_j\ge2$
such that
\[
        \norm{f-S_Nf}_2\ll (\log\log\log N)^{-C}
\]
for all sufficiently large $N$, but
\[
        \limsup_{N\to\infty}\frac{1}{N}\sum_{j\le N}f(n_jx)=+\infty
\]
for almost every $x$.
\end{corollary}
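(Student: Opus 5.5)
This follows from \cref{thm:endpoint}, or equally from \cref{cor:admissible}, once we check that the triple-logarithmic rate is weaker than the endpoint rate. The plan is to take the very same function $f$ and lacunary sequence $(n_j)$ produced by \cref{thm:endpoint}. The divergence statement $\limsup_{N}N^{-1}\sum_{j\le N}f(n_jx)=+\infty$ almost everywhere is inherited verbatim, as is membership of $f$ in $\bigcap_{1\le p<\infty}L^p(\T)$. The properties that $f$ is real-valued and mean-zero, and that $n_{j+1}/n_j\ge2$, carry over in the same way. So the only thing to verify is the Fourier-tail bound.

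For that, it suffices to show
\[
(\log\log N)^{-1/2}\ll(\log\log\log N)^{-C}
\]
for large $N$, for every fixed $C>0$. Put $u=\log\log N\to\infty$. The inequality becomes $(\log u)^{C}\le K u^{1/2}$, which holds for all large $u$ because $(\log u)^C=o(u^{1/2})$. Combining this with \eqref{eq:endpoint-tail} gives
\[
\norm{f-S_Nf}_2\ll(\log\log N)^{-1/2}\ll(\log\log\log N)^{-C}
\]
for all sufficiently large $N$. This is the claimed estimate.

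One could alternatively check that $\omega(N)=(\log\log\log N)^{-C}$ is admissible in the sense of \cref{def:admissible} and invoke \cref{cor:admissible}. With $N=\exp(\exp(2A\log A))$ one has $\log\log\log N=\log(2A\log A)\sim\log A$. Hence $A^{1/2}\omega(N)\asymp A^{1/2}(\log A)^{-C}\to\infty$, and $\omega$ is decreasing on $[N_0,\infty)$ for $N_0$ large.

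There is no real obstacle here. The only minor point is choosing $N_0\ge e^e$ large enough that the triple logarithm is defined and positive, and this is harmless since all bounds are only claimed for large $N$.
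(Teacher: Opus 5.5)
Your proposal is correct and matches the paper's proof: the paper also takes the function and sequence from \cref{thm:endpoint} and observes that $(\log\log N)^{-1/2}=o\bigl((\log\log\log N)^{-C}\bigr)$. Your optional route through \cref{cor:admissible} is also valid, but it is not needed.
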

\begin{proof}
\hyperlink{proof.cor.996}{See Section~\ref*{sec:fourier-conseq}.}
\end{proof}

\begin{corollary}[Sharpness at Matsuyama's endpoint]\label{cor:matsuyama}
For every $0<c\le 1/2$ there exist a real-valued mean-zero function
$f\in\bigcap_{1\le p<\infty}L^p(\T)$ and a lacunary integer sequence $(n_j)$ with $n_{j+1}/n_j\ge2$
such that
\[
        \norm{f-S_Nf}_2\ll (\log\log N)^{-c}
\]
for all sufficiently large $N$, while
\[
        \limsup_{N\to\infty}\frac{1}{N}\sum_{j\le N}f(n_jx)=+\infty
\]
for almost every $x$.
Thus the range $c>1/2$ in Matsuyama's theorem is sharp in the sense that the
endpoint $c=1/2$ already admits counterexamples.
\end{corollary}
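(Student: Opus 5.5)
This corollary follows directly from \cref{thm:endpoint}, with no new construction. Take the function $f$ and the dyadic sequence $n_j=2^{m_j}$ supplied by \cref{thm:endpoint}. The properties of $f$ are already as required: it is real-valued, mean-zero, lies in $\bigcap_{1\le p<\infty}L^p(\T)$, and satisfies $n_{j+1}/n_j\ge2$. The divergence statement
\[
\limsup_{N\to\infty}\frac1N\sum_{j\le N}f(n_jx)=+\infty \quad\text{for a.e.\ } x
\]
does not involve $c$, so it transfers unchanged. The only thing left to check is the Fourier-tail bound for each $0<c\le 1/2$.

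For the tail bound, note that for $N\ge e^e$ we have $\log\log N\ge 1$. Since $c\le 1/2$, raising a quantity at least $1$ to a smaller power gives a smaller result:
\[
(\log\log N)^{c}\le(\log\log N)^{1/2},
\qquad\text{equivalently}\qquad
(\log\log N)^{-1/2}\le(\log\log N)^{-c}.
\]
Combining this with \eqref{eq:endpoint-tail}, for all sufficiently large $N$,
\[
\norm{f-S_Nf}_2\ll(\log\log N)^{-1/2}\le(\log\log N)^{-c},
\]
with the same implied constant as in \cref{thm:endpoint}. This gives the tail condition, and with it the conclusion.

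The sharpness remark is immediate. Matsuyama's theorem gives almost-everywhere convergence of the averages $N^{-1}\sum_{j\le N}f(n_jx)$ under $\norm{f-S_Nf}_2\ll(\log\log N)^{-c}$ for $c>1/2$. The case $c=1/2$ above produces a function for which these averages have $\limsup=+\infty$ almost everywhere, so convergence fails. Hence the range $c>1/2$ cannot be extended to include the endpoint.

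All the difficulty sits inside \cref{thm:endpoint}. The only point to watch here is the direction of the monotonicity: smaller $c$ gives a weaker hypothesis, which is exactly why the endpoint example covers every $c\le1/2$.
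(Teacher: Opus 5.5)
Your proposal is correct and is essentially the paper's own proof. Both apply \cref{thm:endpoint} and observe that $(\log\log N)^{-1/2}\le(\log\log N)^{-c}$ for all large $N$ whenever $0<c\le 1/2$, while the divergence conclusion carries over unchanged.
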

\begin{proof}
\hyperlink{proof.cor.matsuyama}{See Section~\ref*{sec:fourier-conseq}.}
\end{proof}

The same master construction also gives near-sharp
large-partial-sum counterexamples in every finite $L^p$, $p\ge2$.

\begin{theorem}[Large $L^p$ partial sums; Erd\H{o}s Problem \#995 at $p=2$]\label{thm:large-partial-sums}
Let $2\le p<\infty$.  There exist a real-valued mean-zero function
$f\in L^p(\T)$ and a lacunary integer sequence $(n_j)$ with
$n_{j+1}/n_j\ge2$ such that, for almost every $x\in\T$,
\begin{equation}\label{eq:large-partial-sums}
        \limsup_{N\to\infty}
        \frac{\sum_{j\le N} f(n_jx)}
        {N(\log N)^{1/p-\varepsilon}}
        =+\infty
        \qquad\text{for every }\varepsilon>0.
\end{equation}
In particular, taking $p=2$ gives an $L^2$ example for which
$\sum_{j\le N} f(n_jx)$ is not $o(N\sqrt{\log\log N})$ almost everywhere.
\end{theorem}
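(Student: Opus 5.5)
The plan is to run the dyadic spike-block master construction used for \cref{thm:endpoint} with a different choice of parameters. There $\lambda_k$ was forced to decay fast enough to give \eqref{eq:endpoint-tail}; here the only constraint is $\sum_k\lambda_k^{p/2}$-type summability in $L^p$.

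\textbf{The spike.} Take the dyadic spike $\phi_d$ of depth $d$: mean zero, equal to $h_d\approx 2^{d}$ on a dyadic interval of measure $2^{-d}$, and bounded below by $-g_d$ with $g_d\approx 1$. Then $\norm{\phi_d}_p^p\approx 2^{d(p-1)}$.

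\textbf{The block $F_k$.} At stage $k$, add $F_k=c_k\sum_{l\le L_k}\phi_{d_k}(2^{U_k+lD_k}\,\cdot)$. The spacing $D_k$ is large, so the layers behave like independent copies (dyadic bits of $x$), and the block norm is $\norm{F_k}_p\lesssim c_kL_k^{1/2}2^{d_k(1-1/p)}$ by Rosenthal/Burkholder-type square-function bounds for dyadic martingale differences. We then insert into the sequence $(n_j)$ a run of exponents $m$ along which the averages of $f(2^mx)$ see a spike of height $c_kh_{d_k}$. Concretely, trial $t$ of stage $k$ inserts $\ell_{k,t}$ consecutive exponents $\mathcal I_{k,t}$ that all read the same spike coordinate. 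On the event that this coordinate fires, which has probability $2^{-d_k}$, the run contributes about $\ell_{k,t}c_k2^{d_k}$. Across $T_k\gg 2^{d_k}$ independent trials (independent because they use disjoint bit coordinates, tracked via $\Omega_k$), Borel--Cantelli makes some trial succeed at almost every $x$.

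\textbf{Counting.} At the end of a successful trial the partial sum is at least $\ell_{k,t}c_k2^{d_k}$ minus the floor losses. Choosing $\ell_{k,t}\ge P_{k,t}$, so that the run dominates the prefix, gives
\[
\sum_{j\le N_{k,t}}f(n_jx)\gtrsim N_{k,t}\,c_k2^{d_k}.
\]
We need $c_k2^{d_k}\gg(\log N_k^*)^{1/p-\varepsilon}$. Now $N_k^*$ is roughly $T_k$ times the run lengths, and run lengths double along trials, so $\log N_k^*\approx T_k\approx 2^{d_k}$ up to the factors from earlier stages. Taking $d_k$ growing, $L_k=1$, and $c_k=k^{-2/p}$ gives $\norm{F_k}_p^p\approx k^{-2}2^{d_k(p-1)}$. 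That is not summable unless we renormalize, so instead we fix $B_k=c_k2^{d_k}$ as the target height and require $\sum_k B_k^p2^{-d_k}<\infty$. With $2^{d_k}\approx\log N_k^*$, this amounts to $\sum_k B_k^p/\log N_k^*<\infty$, while divergence needs $B_k/(\log N_k^*)^{1/p-\varepsilon}\to\infty$. Both hold with $B_k=(\log N_k^*)^{1/p}k^{-2/p}$, provided $\log N_k^*$ grows superpolynomially in $k$, since every $\varepsilon>0$ then beats $k^{-2/p}$.

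\textbf{Floor control.} The remaining stages must not cancel the gains. Every block is bounded below by $-c_kg_{d_k}L_k$, and $\sum_kc_k<\infty$, so $\sum_{j\le N}f(n_jx)\ge -CN$ deterministically from below, apart from the current spike. This deterministic lower floor is the reason cancellation cannot occur. Fourier/valuation bands $\mathcal V_k$ must be chosen so that distinct blocks do not interact, which is only needed for the $L^p$ norm.

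\textbf{Main obstacle.} I expect the hardest step to be proving independence of the good-trial events across trials and stages, while keeping $\log N_k^*\asymp 2^{d_k}$ (up to $k^{O(1)}$) despite the bookkeeping of runs and prefixes. I would first try to make runs geometric, $\ell_{k,t}=P_{k,t}$, which doubles $N$ per trial, so $\log_2N_k^*\approx\sum_{i\le k}T_i$. I would then choose $T_k=k^22^{d_k}$ so that the failure probability $(1-2^{-d_k})^{T_k}\le e^{-k^2}$ is summable.
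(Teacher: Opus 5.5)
There is a genuine gap: the parameters you settle on are incompatible with the amplification step you describe. For all $\ell_{k,t}$ exponents $M+rD_k$ of a trial to read the same coordinate $U_k+M+hD_k$, the block must contain a layer $q=h-r$ for every $1\le r\le\ell_{k,t}$. So $L_k\ge\ell_{k,t}$ is forced; this is the convolution identity \eqref{eq:convolution-identity}.

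With your choice $L_k=1$, each exponent reads a different digit window. A hit is then counted once, and the trial gains only $c_k2^{d_k}$ rather than $\ell_{k,t}c_k2^{d_k}$.

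If you restore $L_k\ge\max_t\ell_{k,t}$, the block floor becomes $-c_kL_k$, not $-c_k$. The signal survives only if $2^{d_k}\gg L_k\ge\ell_{k,T_k}\ge 2^{T_k-1}$ (with your doubling runs), i.e.\ $T_k\le d_k+O(1)$. That contradicts $T_k=k^22^{d_k}$. A good event consisting of a single target coordinate, of probability $2^{-d_k}$, then cannot be made to occur at stage $k$. So the relations $\log N_k^*\approx T_k\approx 2^{d_k}$ cannot coexist with the floor requirement.

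Even within your own accounting the powers of $k$ do not close. $T_k=k^22^{d_k}$ gives $\log N_k^*\gtrsim k^22^{d_k}$, so $B_k=(\log N_k^*)^{1/p}k^{-2/p}$ makes the cost $B_k^p2^{-d_k}\gtrsim 1$, which is not summable.

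The missing idea is to let the good event be a hit at \emph{any} of the $\asymp L_k$ central coordinates $\ell_{k,t}+1\le h\le L_k+1$, each of which is read with full weight $\ell_{k,t}$. This gives per-trial success probability $\asymp L_k2^{-d_k}$. Then only $T_k\asymp(2^{d_k}/L_k)\log k$ trials are needed, which is consistent with $L_k\approx e^{CT_k}$ and $2^{d_k}\gg L_k$.

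The $L^p$ cost must then be computed with Rosenthal's inequality in the form $\norm{F_k}_p^p\lesssim L_kc_k^p2^{d_k(p-1)}+(c_k^2L_k2^{d_k})^{p/2}$, which is linear in $L_k$. Your bound $c_k^pL_k^{p/2}2^{d_k(p-1)}$ loses a factor $L_k^{p/2-1}$. That loss is fatal for $p>2$ once $L_k$ is exponentially large.

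With these corrections:
\begin{itemize}
\item the cost is $\asymp L_kB_k^p2^{-d_k}\asymp B_k^p\log k/T_k$;
\item the floor requires $\sum_kc_kL_k<\infty$, not $\sum_kc_k<\infty$;
\item your heuristic balance, $\sum_kB_k^p/\log N_k^*<\infty$ against $B_k\gg(\log N_k^*)^{1/p-\varepsilon}$, is recovered.
\end{itemize}
The corrected scheme is the paper's. In its normalization, $\lambda_k=a_kB_k^{-(p-2)}$, $T_k\asymp B_k^2\lambda_k^{-1}\log(k+1)$, $2^{d_k}\asymp B_k^2L_k/\lambda_k$, and $B_k$ is then taken large at each stage.
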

\begin{proof}
\hyperlink{proof.thm.large.partial.sums}{See Section~\ref*{sec:large-sums}.}
\end{proof}

Finally we include a bounded companion construction.  Stronger qualitative
small-set statements are already known from the lacunary sweeping-out literature
\cite{ABJLRW1996,MondalRoyWierdl2023}; the point here is to show
how the stage-and-trial architecture works in the bounded setting.

\begin{theorem}[Bounded companion construction]\label{thm:bounded}
For every $0<\varepsilon<1$ there exist a measurable set $E\subset\T$ and a
lacunary integer sequence $(n_j)$ with $n_{j+1}/n_j\ge2$ such that
$|E|<\varepsilon$ and, for almost every $x\in\T$,
\[
        \limsup_{N\to\infty}\frac{1}{N}\sum_{j\le N}\1_E(n_jx)=1.
\]
Consequently the bounded mean-zero function $\1_E-|E|$ has lacunary
averages which fail to converge almost everywhere along this sequence.
\end{theorem}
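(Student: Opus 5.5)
We build $E$ as a disjoint-ish union of small dyadic sets $E_k$, with total measure $\sum_k|E_k|<\varepsilon$, and construct the exponents $m_j$ (so $n_j=2^{m_j}$) in stages. The basic tool is the dyadic structure of the doubling map. If $x$ has binary digits $x_1x_2\dots$, then $2^m x \bmod 1$ has digits $x_{m+1}x_{m+2}\dots$. So $\1_{E_k}(2^mx)$ depends only on a window of bits starting at position $m+1$. Take $E_k$ to be the cylinder $\{y: y_1=\dots=y_{d_k}=0\}$, where $2^{-d_k}=\delta_k$ and $\sum\delta_k<\varepsilon$. Then $\1_{E_k}(2^mx)=1$ exactly when bits $m+1,\dots,m+d_k$ of $x$ all vanish. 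Since Lebesgue-almost every $x$ has independent fair bits, this is a problem about Bernoulli sequences.

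The stage-and-trial architecture runs as follows. At stage $k$ we have already selected $N_{k-1}^*$ exponents, and we run $T_k$ independent trials, each on a fresh, disjoint block of bit coordinates beyond everything used so far ($\Omega_{k-1}$). Trial $t$ picks a start $M_{k,t}$ and a run length $\ell_{k,t}\ge k\,P_{k,t}$, where $P_{k,t}$ is the number of exponents selected before the trial. It inserts the exponents
\[
\mathcal I_{k,t}=\{M_{k,t},M_{k,t}+1,\dots,M_{k,t}+\ell_{k,t}-1\}.
\]
Consecutive exponents are admissible since we only need $m_{j+1}>m_j$. The trial is \emph{good} if the $d_k$-bit window $x_{M+1},\dots,x_{M+\ell+d_k-1}$, where $M=M_{k,t}$ and $\ell=\ell_{k,t}$, is entirely zero. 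On that event, every inserted exponent $m$ lands in $E_k\subset E$. After a good trial,
\[
\frac1{N_{k,t}}\sum_{j\le N_{k,t}}\1_E(n_jx)\ \ge\ \frac{\ell_{k,t}}{P_{k,t}+\ell_{k,t}}\ \ge\ 1-\frac1{k+1}.
\]
A good trial has probability $2^{-(\ell_{k,t}+d_k-1)}=:\pi_{k,t}>0$. Because the trials use disjoint bit blocks, the good events are independent. We choose $T_k$ so large that $\prod_t(1-\pi_{k,t})<2^{-k}$. This is possible because $\ell_{k,t}$ grows with $P_{k,t}$, so we must argue it carefully: we take $\ell_{k,t}=kP_{k,t}$, so $\pi_{k,t}$ decays roughly like $2^{-k P_{k,t}}$. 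The sum $\sum_t\pi_{k,t}$ then converges, and the naive choice fails.

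This is the main obstacle. Trials cost length and lower the probability, and lengths compound. The fix I would try first is to separate the probability from the run length by using a \emph{single-bit} trigger. Choose $E_k$ as the set where the first $d_k$ bits are zero, as before. Rather than requiring a long all-zero stretch, insert exponents spaced $d_k$ apart,
\[
m = M, M+d_k, M+2d_k,\dots,
\]
so that their windows are disjoint. The trial is good if each window hits $E_k$, which again gives an exponentially small probability. The fix therefore has to change the set instead. Take $E$ to be the fixed set $\{y: y_1=\dots=y_d=0\}$, with $2^{-d}<\varepsilon$, and insert the \emph{same} exponent shifted by one: the exponents $M, M+1, \dots, M+\ell-1$ all land in $E$ once the $\ell+d-1$ bits are zero.

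The honest resolution is to let the trial index range over all starting positions $M$ but pack the trials densely. Consider a long run of zeros of length $\ell+d$ starting anywhere in a huge window $[A,A']$ of bit positions. By the strong law / Erd\H{o}s–Rényi longest-run theorem, such runs with $\ell\approx \log_2(A'-A)$ exist almost surely. The difficulty is that the exponents must be chosen deterministically, not depending on $x$.

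So the deterministic device is this. At stage $k$, repeat the block $\{M,\dots,M+\ell-1\}$ for many disjoint $M$, and use Borel–Cantelli along $k$ in its independent form. We only need, for a.e.\ $x$, infinitely many $k$ with at least one good trial. It suffices that the stage-$k$ failure probabilities $\prod_t(1-\pi_{k,t})$ are bounded away from $1$ with independent stages, so that $\sum_k \Pr(\text{stage }k\text{ good})=\infty$ by the second Borel–Cantelli lemma. Each stage inserts a large number $P$ of filler exponents before its decisive trial, and failed trials only dilute the average. The delicate bookkeeping is the following. Before the final trial of stage $k$, all prior exponents, including failed trials, number $P_k$. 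We choose a single trial of length $\ell_k=kP_k$, whose success probability $2^{-(kP_k+d)}$ is tiny. This is exactly where the block-of-trials counting must be optimized, and I expect this recursion to be the crux, possibly requiring $E$ of varying depths $d_k\to\infty$ arranged so that long runs of hits come from the union $\bigcup E_k$ rather than from a single cylinder.
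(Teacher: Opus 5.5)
Your proposal does not reach a proof. The obstacle you identify is fatal to every variant you list; better bookkeeping will not fix it. In each version (consecutive exponents with one cylinder, exponents spaced $d_k$ apart, or one long decisive trial per stage), a trial of length $\ell$ succeeds only if about $\ell$ fresh binary digits of $x$ all vanish. So its success probability is at most $2^{-\ell}$.

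The averaging step forces $\ell_{k,t}\ge kP_{k,t}$, and $P_{k,t}$ grows by at least one with every trial. Hence the success probabilities of all trials form a summable series, and by the first Borel--Cantelli lemma almost every $x$ has only finitely many good trials. Your closing scheme fails for the same reason, since $\sum_k2^{-(kP_k+d)}<\infty$.

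In fact a single fixed cylinder $E=\{y_1=\dots=y_d=0\}$ cannot work with any distinct exponents. Suppose at least $(1-\theta)N$ of the first $N$ dilates lie in $E$. Then the distinct bits $m_j+1$ for those $j$ all vanish. This event has probability at most $\sum_{i\le\theta N}\binom{N}{i}2^{-(1-\theta)N}$, which is summable in $N$ for small fixed $\theta$. That forces $\limsup\le1-\theta$ almost everywhere.

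Your stage-and-trial bookkeeping is the right one: disjoint digit windows, and trial lengths dominating the past count. What is missing is a shape of $E_k$ under which one rare event is counted $\ell$ times. The paper takes $E_k$ to be a union of $L_k$ preimages of one small cylinder under evenly spaced dyadic maps,
\[
E_k=\bigcup_{q=1}^{L_k}\{y\in\T:\{2^{qD_k}y\}<2^{-d_k}\},\qquad D_k=d_k+2,\qquad A_kL_k\le2^{d_k}<2A_kL_k.
\]
Each trial is an arithmetic progression with the same step, $\mathcal I_{k,t}=\{M_{k,t}+rD_k:1\le r\le\ell_{k,t}\}$ with $\ell_{k,t}\le L_k/8$. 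Suppose $\{2^{M_{k,t}+hD_k}x\}<2^{-d_k}$ for a single central index $\ell_{k,t}+1\le h\le L_k+1$. Then for each $r$ the index $q=h-r$ lies in $[1,L_k]$, and $2^{qD_k}(2^{M_{k,t}+rD_k}x)=2^{M_{k,t}+hD_k}x$, so every point of the trial lies in $E_k$.

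The central windows are disjoint. So a trial succeeds with probability at least $\tfrac12(L_k-\ell_{k,t}+1)2^{-d_k}\ge 7/(32A_k)$, independent of $\ell_{k,t}$. Meanwhile $|E_k|\le L_k2^{-d_k}\le A_k^{-1}$, and $\sum_kA_k^{-1}<\varepsilon$ gives $|E|<\varepsilon$.

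Now choose the parameters as follows:
\begin{itemize}
\item $\ell_{k,t}=\lceil(k+1)(P_{k,t}+1)\rceil$;
\item $L_k=8\max_t\ell_{k,t}$, which does not depend on $d_k$;
\item starts spaced so that all digit windows are disjoint;
\item $T_k\asymp A_k\log(k+1)$ trials per stage.
\end{itemize}
Then the probability that stage $k$ has no good trial is at most $(k+1)^{-8}$. The first Borel--Cantelli lemma gives a good trial in every large stage, and at its endpoint the average is at least $1/(1+(k+1)^{-1})\to1$.

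You also still owe the short argument for $\1_E-|E|$. If the averages converged almost everywhere, the bounded limit would have integral $0$ by dominated convergence. That contradicts $\limsup=1-|E|>0$ almost everywhere.
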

\begin{proof}
\hyperlink{proof.thm.bounded}{See Section~\ref*{sec:bounded}.}
\end{proof}

\subsection{Roadmap and architecture}\label{subsec:roadmap}

The main construction of this paper is the one used to prove
\cref{thm:endpoint}.  We will construct a function
\[
        f=\sum_{k\ge1}F_k\in\bigcap_{1\le p<\infty}L^p(\T)
\]
and a lacunary sequence
\[
        n_j=2^{m_j},
\]
where the exponents $m_j$ are chosen in stages.  The $k$th stage contributes one
new function block $F_k$ and a finite batch of new exponents.  The block is built
so that it usually has only a very small negative value, but on a rare dyadic
cylinder it has a large positive spike.  The exponents are arranged so that, if a
trial hits one of these rare cylinders, the same spike is counted many times in a
short partial average.  The resulting average is large and positive.

The reader may keep the following picture in mind.  At stage $k$ there are two
main numerical parameters,
\[
        \lambda_k=\norm{F_k}_2^2,
        \qquad B_k.
\]
The parameter $\lambda_k$ is the squared $L^2$ cost we are willing to spend at
that stage.  The parameter $B_k$ is the size of the signal we want to see in a
partial average.  The elementary building block is the dyadic spike
$\phi_d$: it is positive of size about $2^{d/2}$ on an interval of length
$2^{-d}$, and it is negative of size about $2^{-d/2}$ everywhere else.  Thus it
has mean zero and $L^2$ norm one, but it is very asymmetric.  We combine many
independent translates of this spike into the block
\[
        F_k(x)=\sqrt{\frac{\lambda_k}{L_k}}
        \sum_{q=1}^{L_k}\phi_{d_k}(2^{U_k+qD_k}x).
\]
The depth $d_k$ is chosen so that one positive summand has normalized size
comparable to $B_k$.  At the same time the negative part of the whole block is
uniformly small:
\[
        F_k(x)\ge -C\frac{\lambda_k}{B_k}.
\]
This deterministic lower floor is the shielding mechanism of the paper.  It is
what prevents the rest of the series from cancelling a successful positive
signal.

A trial is a short arithmetic progression of exponents,
\[
        M+D_k,\ M+2D_k,\ldots,\ M+\ell D_k.
\]
When the block $F_k$ is summed over this progression, the summands reorganize as
\[
        \sum_{r=1}^{\ell}F_k(2^{M+rD_k}x)
        =\sqrt{\frac{\lambda_k}{L_k}}
        \sum_h w_h\phi_{d_k}(2^{U_k+M+hD_k}x),
\]
where $w_h$ is a simple convolution weight.  In the central range the weight is
exactly $\ell$.  Therefore a single central spike is not counted once; it is
counted $\ell$ times.  This is the local amplification step.  A successful trial
has probability comparable to $\lambda_k/B_k^2$, and on such a trial the block
contributes at least $2B_k\ell$.

This stage-and-trial mechanism is a dyadic spike refinement of the Rademacher
interval construction in Erd\H{o}s's 1949 paper \emph{On the Strong Law of Large Numbers}
\cite{Erdos1949}.  Erd\H{o}s writes \(f\) as a sum of normalized Rademacher
blocks and chooses the lacunary exponents \(n_j=2^m\) in many well-separated
intervals of \(m\)'s.  On one such interval, the current block contributes a
long central Rademacher sum, multiplied by the length of the interval, while
boundary terms are deterministic errors; the different intervals are independent,
and the old and future blocks are controlled by \(L^2\) estimates.  The present
construction keeps this architecture---stages, many independent trials, a large
current-block signal, and separate shielding of all other terms---but replaces
the Rademacher block by a sparse dyadic spike block.  A successful
trial is therefore not a large Gaussian fluctuation of many Rademachers; it is a
rare central dyadic hit which is counted with weight \(\ell\) across the trial.
This produces a macroscopic positive signal while keeping the block cheap in
\(L^p\), giving explicit Fourier-tail control, and supplying the deterministic
lower floor needed to prevent cancellation.

The global construction repeats this trial many times at each stage.  The starts
$M_{k,t}$ are placed far apart in binary digits, so the good-trial events are
independent.  If $S_k$ denotes the event that stage $k$ has at least one
successful trial, then
\[
        \Pr(S_k^c)\le
        \exp\left(-cT_k\frac{\lambda_k}{B_k^2}\right),
\]
where $T_k$ is the number of trials at stage $k$.  Once a trial succeeds, the
lower floor controls everything outside the signal block: old terms,
future terms, and all non-spiking parts of the current block.  This yields the
master estimate
\[
        \frac1{N_{k,t}}\sum_{j\le N_{k,t}} f(n_jx)
        \ge B_k-\mu
\]
at the endpoint of a successful trial, where
\[
        \mu=C\sum_k\frac{\lambda_k}{B_k}<\infty.
\]
The proof is therefore organized around two complementary tasks: make the stage
successes occur often enough, and make the costs $\lambda_k$ small enough to
place the final function in the desired regularity class.

The endpoint Fourier-tail theorem uses the delicate parameter regime
\[
        T_k\asymp \lambda_k^{-1}.
\]
In this regime the number of selected exponents grows exponentially in
$\lambda_k^{-1}$, while the Fourier threshold $Q_k$ of the block grows
double-exponentially.  Thus
\[
        \log\log Q_k\asymp \lambda_k^{-1},
\]
which turns the stage cost $\lambda_k$ into the endpoint squared
$L^2$ tail $(\log\log N)^{-1}$, equivalently the $L^2$ tail
$(\log\log N)^{-1/2}$.  The signal heights $B_k$ are allowed to tend to infinity
slowly, with $\sum_kB_k^{-2}=\infty$, so Borel--Cantelli gives infinitely many
successful stages and hence a divergent limsup.

The finite-$L^p$ large-partial-sum theorem uses the same geometry but a different
choice of parameters.  There the Fourier tail is irrelevant.  We choose
\[
        \lambda_k=a_kB_k^{-(p-2)}
\]
with $\sum_ka_k<\infty$, which makes the $L^p$ cost of the $k$th block summable.
Then we run many more trials,
\[
        T_k\asymp \frac{B_k^2}{\lambda_k}\log(k+1),
\]
so that stage failure is summable.  The signal $B_k$ can then be chosen large
enough to beat the scale $N(\log N)^{1/p-\varepsilon}$ at the corresponding trial
endpoints.  The bounded companion theorem at the end of the paper uses the same
stage-and-trial architecture, but replaces spike blocks by small dyadic hitting
sets.

\medskip\noindent\textbf{Organization of the paper.}
Section~\ref{sec:spikes} introduces the
dyadic spike, records its distribution, independence, and Fourier support, and
proves its basic Fourier-tail estimate.  Section~\ref{sec:local} builds spike
blocks and proves the local amplification lemma for one trial.  Section~\ref{sec:master}
assembles the blocks and trials into the master construction and proves the
master principle.  Section~\ref{sec:endpoint} chooses the endpoint
parameters and proves \cref{thm:endpoint}.  Section~\ref{sec:fourier-conseq}
derives the Fourier-tail corollaries, including the admissible-modulus statement
and the negative answer to Erd\H{o}s Problem \#996.  Section~\ref{sec:large-sums}
proves the finite-$L^p$ large-partial-sum theorem and the negative answer to
Erd\H{o}s Problem \#995 at $p=2$.  Section~\ref{sec:bounded} gives the bounded
small-set companion construction.  Section~\ref{sec:questions} records several
remaining questions suggested by the construction.

\section{Dyadic spikes}\label{sec:spikes}

We identify $\T$ with $[0,1)$ and remove once and for all the countable set of
points whose binary expansion is ambiguous after one of the dyadic shifts used
below.  This null set is invariantly harmless because only countably many shifts
and dilations occur in the construction.

For $d\ge1$ we define the \emph{spike} by
\begin{equation}\label{eq:phi-def}
        \phi_d(x)\coloneqq\frac{\1_{[0,2^{-d})}(x)-2^{-d}}{\sqrt{2^{-d}(1-2^{-d})}}
        =(h_d+g_d)\1_{[0,2^{-d})}(x)-g_d,
\end{equation}
where $h_d\coloneqq\sqrt{2^d-1}$ and $g_d\coloneqq1/\sqrt{2^d-1}$.
Then $\int_\T\phi_d=0$ and $\norm{\phi_d}_2=1$.

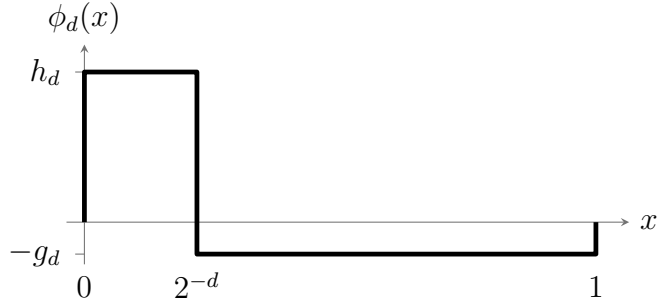
\begin{figure}[ht]
\centering
\begin{tikzpicture}[
  x=6.8cm,
  y=0.85cm,
  >=stealth,
  axis/.style={->,line width=0.4pt,black!55},
  tick/.style={line width=0.4pt,black!55},
  graph/.style={black,line width=1.8pt,line cap=butt,line join=round}
]
  \def\cutoff{0.22}
  \def\spikeheight{2.35}
  \def\floorheight{0.50}
  \def\xlabeldrop{-0.68}

  \draw[axis] (-0.035,0) -- (1.065,0)
    node[anchor=west,black] {$x$};
  \draw[axis] (0,-0.66) -- (0,2.78)
    node[anchor=south,black] {$\phi_d(x)$};

  \draw[tick] (-0.018,\spikeheight) -- (0,\spikeheight);
  \draw[tick] (-0.018,-\floorheight) -- (0,-\floorheight);

  \draw[graph]
    (0,0)
    -- (0,\spikeheight)
    -- (\cutoff,\spikeheight)
    -- (\cutoff,-\floorheight)
    -- (1,-\floorheight)
    -- (1,0);

  \node[anchor=north] at (0,\xlabeldrop-.01) {$0$};
  \node[anchor=north] at (\cutoff,\xlabeldrop+.07) {$2^{-d}$};
  \node[anchor=north] at (1,\xlabeldrop-.01) {$1$};
  \node[anchor=east] at (-0.025,\spikeheight) {$h_d$};
  \node[anchor=east] at (-0.025,-\floorheight) {$-g_d$};
\end{tikzpicture}
\caption{The spike $\phi_d$.
Here $h_d\coloneqq\sqrt{2^d-1}$ and $g_d\coloneqq1/\sqrt{2^d-1}$.}
\end{figure}

For a nonzero integer $r$, let $\val(r)$ be its dyadic valuation.  The following
facts are elementary but drive the whole construction.

\begin{lemma}[Distribution, independence, and Fourier support]\label{lem:spike-basic}
Let $d\ge1$.
\begin{enumerate}[label=(\alph*)]
\item The random variable $\phi_d(x)$ takes the values $h_d$ and $-g_d$ with
probabilities $2^{-d}$ and $1-2^{-d}$ respectively.
\item If $0\le v_1<\cdots<v_s$ and $v_{i+1}-v_i\ge d$, then the random variables
$\phi_d(2^{v_i}x)$, $1\le i\le s$, are independent.
\item For $r\ne0$, $\widehat{\phi_d}(r)=0$ whenever $2^d\mid r$.  Hence, for
$v\ge0$, the nonzero Fourier coefficients of $\phi_d(2^v x)$ occur only at
frequencies whose dyadic valuations lie in the interval
\[
        [v,v+d-1]\coloneqq\{v,v+1,\dots,v+d-1\}.
\]
\end{enumerate}
\end{lemma}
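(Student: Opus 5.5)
The plan is to work throughout with the binary expansion $x=\sum_{i\ge1}\varepsilon_i(x)2^{-i}$. Under normalized Lebesgue measure on $[0,1)$, with the countable exceptional set removed, the digits $\varepsilon_i$ are i.i.d.\ fair bits. The basic observation is that $\1_{[0,2^{-d})}(2^vx \bmod 1)=1$ holds exactly when $\varepsilon_{v+1}(x)=\cdots=\varepsilon_{v+d}(x)=0$. Hence $\phi_d(2^vx)$ is a function of the digit block $(\varepsilon_{v+1},\dots,\varepsilon_{v+d})$ alone.

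\textbf{Part (a).} Read it off the second expression in \eqref{eq:phi-def}. On $[0,2^{-d})$ we have $\phi_d=(h_d+g_d)-g_d=h_d$, and off this interval $\phi_d=-g_d$. The interval has measure $2^{-d}$, which gives the stated probabilities. As a consistency check, $h_d2^{-d}-g_d(1-2^{-d})=0$ and $h_d^2 2^{-d}+g_d^2(1-2^{-d})=1$ follow from $h_dg_d=1$ and $h_d^2=2^d-1$.

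\textbf{Part (b).} Use the block description above. When $v_{i+1}-v_i\ge d$, the index sets $\{v_i+1,\dots,v_i+d\}$ are pairwise disjoint. Functions of disjoint sets of independent digits are independent, so the variables $\phi_d(2^{v_i}x)$ are independent.

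\textbf{Part (c).} Compute directly. Since the constant term does not affect $\widehat{\phi_d}(r)$ for $r\ne0$, we get $\widehat{\phi_d}(r)=c_d\int_0^{2^{-d}}e^{-2\pi i r x}\,dx=c_d\,\frac{1-e^{-2\pi i r2^{-d}}}{2\pi i r}$ with $c_d=h_d+g_d$. This vanishes when $2^d\mid r$. Next, $\phi_d(2^vx)=\sum_r\widehat{\phi_d}(r)e^{2\pi i r2^vx}$, so its nonzero coefficients sit at frequencies $2^vr$ with $r\ne0$ and $2^d\nmid r$, together with the zero coefficient at frequency $0$. For such $r$ we have $\val(2^vr)=v+\val(r)\in[v,v+d-1]$.

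The only mild care point is the identification of $\T$ with digit sequences and the null set of dyadic rationals, which the paper has already discarded. I expect no genuine obstacle here.
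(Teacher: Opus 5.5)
Your proposal is correct and follows essentially the same route as the paper: the digit window $v+1,\dots,v+d$ gives (a) and (b), and the explicit coefficient of $\1_{[0,2^{-d})}$ together with the shift $\val(2^vr)=v+\val(r)$ gives (c). The one point worth stating outright in (c) is that $\widehat{\phi_d}(0)=\int_\T\phi_d=0$. Frequency $0$ has no dyadic valuation, so it must carry no coefficient, and that is what mean zero ensures.
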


\begin{proof}
For $v\ge0$, the value of $\phi_d(2^v x)$ is determined by whether the first
$d$ binary digits of $\{2^v x\}$ are all zero; equivalently, it depends only on
the digit window $v+1,\ldots,v+d$.  Disjoint windows are independent, giving
(a) and (b).
For (c), when $r\ne0$,
\[
        \widehat{\1_{[0,2^{-d})}}(r)
        =\frac{1-e^{-2\pi i r2^{-d}}}{2\pi i r},
\]
which vanishes if $2^d\mid r$.  The constant subtraction in \eqref{eq:phi-def}
only affects the zero Fourier coefficient.  Dilating by $2^v$ shifts all dyadic
valuations by $v$.
\end{proof}

\begin{lemma}[Fourier tail of one spike]\label{lem:spike-tail}
There is an absolute constant $C_1>0$ such that, for every $d\ge1$ and $R\ge1$,
\begin{equation}\label{eq:spike-tail-base}
        \sum_{|r|>R}|\widehat{\phi_d}(r)|^2
        \le C_1\min\left(1,\frac{2^d}{R}\right).
\end{equation}
Consequently, for every $v\ge0$ and $N\ge1$,
\begin{equation}\label{eq:spike-tail-dilate}
        \norm{(I-S_N)\phi_d(2^v\cdot)}_2^2
        \le C_1\min\left(1,\frac{2^{d+v}}{N}\right).
\end{equation}
\end{lemma}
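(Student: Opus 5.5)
The plan is to compute the Fourier coefficients of $\phi_d$ explicitly, bound the tail sum directly, and then obtain the dilated statement by reindexing frequencies.

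\textbf{Coefficient bound.} By the formula in the proof of \cref{lem:spike-basic}(c), for $r\ne0$,
\[
        \widehat{\phi_d}(r)=\frac{1}{\sqrt{2^{-d}(1-2^{-d})}}\cdot\frac{1-e^{-2\pi i r2^{-d}}}{2\pi i r}.
\]
The numerator has modulus at most $2$, and $2^{-d}(1-2^{-d})\ge 2^{-d-1}$ for $d\ge1$. Hence
\[
        |\widehat{\phi_d}(r)|^2\le \frac{4\cdot 2^{d+1}}{4\pi^2 r^2}\ll \frac{2^d}{r^2}.
\]

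\textbf{Tail estimate \eqref{eq:spike-tail-base}.} Summing over $|r|>R$ and using $\sum_{r>R}r^{-2}\le 2/R$ for $R\ge1$ gives
\[
        \sum_{|r|>R}|\widehat{\phi_d}(r)|^2\ll 2^d/R.
\]
Parseval also gives $\sum_{|r|>R}|\widehat{\phi_d}(r)|^2\le\norm{\phi_d}_2^2=1$. Taking the better of the two bounds yields \eqref{eq:spike-tail-base} with an absolute constant $C_1$.

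\textbf{Dilated estimate \eqref{eq:spike-tail-dilate}.} The function $\phi_d(2^v\cdot)$ has Fourier coefficient $\widehat{\phi_d}(r)$ at frequency $2^v r$, and zero at frequencies not divisible by $2^v$. Therefore
\[
        \norm{(I-S_N)\phi_d(2^v\cdot)}_2^2=\sum_{|2^vr|>N}|\widehat{\phi_d}(r)|^2=\sum_{|r|>N2^{-v}}|\widehat{\phi_d}(r)|^2 .
\]
There are two cases, according to the size of $R=N2^{-v}$.

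If $R\ge1$, apply \eqref{eq:spike-tail-base} with this $R$. This gives the bound $C_1\min(1,2^{d+v}/N)$.

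If $R<1$, then $2^{d+v}/N>2^d\ge1$, so the right-hand side of \eqref{eq:spike-tail-dilate} equals $C_1$. The left-hand side is at most $\norm{\phi_d}_2^2=1$, so the inequality holds provided $C_1\ge1$.

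\textbf{Main obstacle.} There is no real obstacle. The only point needing care is this edge case $R<1$ in the dilation step, which is handled by choosing $C_1\ge1$.
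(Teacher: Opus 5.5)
Your proof is correct and follows essentially the same route as the paper: bound $|\widehat{\phi_d}(r)|\ll 2^{d/2}/|r|$, sum the tail and combine with Parseval, then for the dilation take $R=N2^{-v}$ when $N\ge 2^v$ and use the trivial Parseval bound otherwise. The only cosmetic difference is that you get the coefficient bound from the explicit formula, whereas the paper gets it from the total variation of $\phi_d$.
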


\begin{proof}
Parseval gives the bound by $1$.  The total variation of $\phi_d$ is
$2/(2^{-d}(1-2^{-d}))^{1/2}\ll 2^{d/2}$, so for $r\ne0$,
$|\widehat{\phi_d}(r)|\ll 2^{d/2}/|r|$.  Summing $r^{-2}$ over $|r|>R$ gives
\eqref{eq:spike-tail-base}.  The dilated estimate follows by applying
\eqref{eq:spike-tail-base} with $R=N/2^v$ when $N\ge2^v$, and by using the
trivial Parseval bound otherwise.
\end{proof}

\section{Blocks and local trials}\label{sec:local}

This section proves the local amplification lemma.  There is no global function
yet.  We fix one spike block and one trial interval, and show that a single
central hit produces a contribution proportional to the trial length.

Fix once and for all
\begin{equation}\label{eq:B0}
        B_0=100.
\end{equation}
A \emph{block parameter set} consists of
\[
        0<\lambda\le1,
        \qquad B\ge B_0,
        \qquad L\ge1,
        \qquad d\ge1,
        \qquad D\ge d+2,
        \qquad U\ge0,
\]
with
\begin{equation}\label{eq:d-choice-local}
        64\frac{B^2L}{\lambda}
        \le 2^d
        <128\frac{B^2L}{\lambda}.
\end{equation}
(See Section~\ref{subsec:roadmap} for how to interpret the parameters.)
The associated block is
\begin{equation}\label{eq:block-def}
        F(x)=\sqrt\frac{\lambda}{L}
        \sum_{q=1}^{L}\phi_d(2^{U+qD}x).
\end{equation}

\begin{lemma}[Norm, lower floor, and Fourier tail of one block]\label{lem:block-basic}
There are absolute constants $C_2,C_3>0$ such that every block
\eqref{eq:block-def} satisfies
\begin{equation}\label{eq:block-norm}
        \int_\T F=0,
        \qquad \norm F_2^2=\lambda,
\end{equation}
\begin{equation}\label{eq:block-lower-floor}
        F(x)\ge -C_3\frac{\lambda}{B}
        \qquad (x\in\T),
\end{equation}
and, for every $N\ge1$,
\begin{equation}\label{eq:block-tail}
        \norm{(I-S_N)F}_2^2
        \le \frac{C_2\lambda}{L}\sum_{q=1}^{L}
        \min\left(1,\frac{2^{d+U+qD}}{N}\right).
\end{equation}
If
\begin{equation}\label{eq:Q-local}
        Q=2^{U+LD+d+2},
\end{equation}
then for every $N\ge Q$,
\begin{equation}\label{eq:block-tail-above-Q}
        \norm{(I-S_N)F}_2^2\le C_2\frac{\lambda Q}{N}.
\end{equation}
\end{lemma}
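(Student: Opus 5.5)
The plan is to deduce each property from \cref{lem:spike-basic,lem:spike-tail} applied to the $L$ summands $\phi_d(2^{U+qD}x)$. The key structural fact is the spacing $D\ge d+2$. Summand $q$ depends only on binary digits $U+qD+1,\dots,U+qD+d$, and these windows are pairwise disjoint. The Fourier supports are separated in the same way: by \cref{lem:spike-basic}(c), the nonzero coefficients of summand $q$ sit at frequencies with dyadic valuation in $[U+qD,\,U+qD+d-1]$, and these valuation bands are disjoint for distinct $q$.

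For \eqref{eq:block-norm}, each summand has mean zero. Disjoint valuation bands make the summands have disjoint Fourier supports, so they are orthogonal in $L^2$ (independence from \cref{lem:spike-basic}(b) would also suffice). Hence
\[
\norm F_2^2=\frac{\lambda}{L}\sum_{q=1}^{L}\norm{\phi_d}_2^2=\lambda .
\]
For the lower floor \eqref{eq:block-lower-floor}, each $\phi_d\ge -g_d$, so
\[
F\ge -\sqrt{\lambda/L}\,L\,g_d=-\sqrt{\lambda L}\,g_d .
\]
Since $g_d=(2^d-1)^{-1/2}\le 2\cdot 2^{-d/2}$ and $2^d\ge 64B^2L/\lambda$ by \eqref{eq:d-choice-local}, we get
\[
\sqrt{\lambda L}\,g_d\le 2\sqrt{\lambda L}\cdot\frac{\sqrt\lambda}{8B\sqrt L}=\frac{\lambda}{4B}.
\]
So $C_3=1/4$ works. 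This is the only place the lower bound in \eqref{eq:d-choice-local} is used.

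For the tail \eqref{eq:block-tail}, note that $I-S_N$ is a Fourier multiplier and therefore preserves the disjointness of the Fourier supports. The pieces $(I-S_N)\phi_d(2^{U+qD}\cdot)$ thus remain orthogonal, and
\[
\norm{(I-S_N)F}_2^2=\frac{\lambda}{L}\sum_{q=1}^{L}\norm{(I-S_N)\phi_d(2^{U+qD}\cdot)}_2^2 .
\]
Applying \eqref{eq:spike-tail-dilate} with $v=U+qD$ to each term gives \eqref{eq:block-tail} with $C_2=C_1$. Keeping orthogonality after truncation is the main point to check. Without it one would only get a Cauchy--Schwarz bound carrying an extra factor of $L$.

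Finally, for $N\ge Q$ we bound each $\min$ by its second argument. The exponents satisfy $d+U+qD\le d+U+LD=\log_2 Q-2$, so every term is at most $Q/(4N)$. Averaging over $q$ then yields \eqref{eq:block-tail-above-Q} with room to spare.
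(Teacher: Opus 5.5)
Your proposal is correct and follows the same route as the paper. The ingredients match: orthogonality of the summands from the disjoint valuation bands (preserved by the multiplier $I-S_N$), the pointwise floor $F\ge-\sqrt{\lambda L}\,g_d$ combined with the lower bound in \eqref{eq:d-choice-local}, and termwise use of \eqref{eq:spike-tail-dilate}. The differences are cosmetic: you make the constants explicit ($C_3=1/4$, $C_2=C_1$) and bound each term by $Q/(4N)$ instead of summing the geometric series $\sum_q 2^{d+U+qD}\le 2^{d+U+LD+1}$. Your aside that the lower bound in \eqref{eq:d-choice-local} is used only once holds only within this lemma, since \cref{lem:local-amplification} relies on it as well.
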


\begin{proof}
In $F(x)$, the index-$q$ summand has nonzero Fourier coefficients only in
the dyadic valuation band
\[
        [U+qD,\,U+qD+d-1].
\]
Since $D\ge d+2$, these bands are pairwise disjoint.  Hence the summands are
orthogonal, have mean zero, and have $L^2$ norm one.  This proves
\eqref{eq:block-norm}.

Since $\phi_d\ge -g_d$ with $g_d\le 2^{-(d-1)/2}$,
\[
        F(x)\ge -\sqrt\frac{\lambda}{L}\,L g_d
        =-\sqrt{\lambda L}\,g_d.
\]
The choice \eqref{eq:d-choice-local} gives
$2^{-d/2}\le (1/8)\sqrt{\lambda/(B^2L)}$, and hence
\[
        \sqrt{\lambda L}\,g_d
        \le C_3\frac{\lambda}{B}.
\]
This proves \eqref{eq:block-lower-floor}.

The tail estimate \eqref{eq:block-tail} then follows from orthogonality of the valuation bands and
\cref{lem:spike-tail}.  If $N\ge Q$, every minimum in \eqref{eq:block-tail} is
attained by the second term and
\[
        \sum_{q=1}^{L}2^{d+U+qD}
        \le 2^{d+U+LD+1}\le Q.
\]
This proves \eqref{eq:block-tail-above-Q} after increasing $C_2$ if necessary.
\end{proof}

The same block has a simple finite-$L^p$ estimate.  This estimate is the only
additional input needed for the $L^p$ refinements in \cref{sec:endpoint,sec:large-sums}.

\begin{lemma}[$L^p$ size of one block]\label{lem:block-lp}
For every $2\le p<\infty$ there is a constant $C_p$ such that every block
\eqref{eq:block-def} satisfies
\begin{equation}\label{eq:block-lp-bound}
        \norm{F}_p^p\le C_p\lambda B^{p-2}.
\end{equation}
\end{lemma}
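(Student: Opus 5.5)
We prove \eqref{eq:block-lp-bound} by writing $F=\sqrt{\lambda/L}\sum_{q=1}^{L}X_q$ with $X_q(x)=\phi_d(2^{U+qD}x)$. By \cref{lem:spike-basic}(b), and since $D\ge d+2>d$, the $X_q$ are i.i.d.; by \cref{lem:spike-basic}(a) each is mean zero, takes the value $h_d$ with probability $2^{-d}$, and equals $-g_d$ otherwise. Split $F=P+G$, where $G=-\sqrt{\lambda L}\,g_d$ is the constant floor and
\[
        P=\sqrt{\lambda/L}\,(h_d+g_d)\,K,\qquad K=\sum_{q=1}^{L}\1[X_q=h_d].
\]
Here $K$ is binomial with parameters $L$ and $\pi=2^{-d}$. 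By \cref{lem:block-basic}, $|G|\le C_3\lambda/B$. Since $\lambda\le1$ and $B\ge B_0$, this gives $|G|^p\le C\lambda B^{-p}\le C\lambda B^{p-2}$. It therefore suffices to bound $\norm{P}_p^p$, and we may use $(a+b)^p\le 2^{p-1}(a^p+b^p)$.

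For $P$ we have $\norm P_p^p\le (\lambda/L)^{p/2}(2h_d)^p\,\mathbb E K^p$. The key point is that $L\pi\le \lambda/(64B^2)\le 1$ by \eqref{eq:d-choice-local}. Hence $K$ is essentially Bernoulli, and we aim to show $\mathbb E K^p\le C_p L\pi$. One route is the bound $\mathbb E K^p\le \sum_{s\ge1}s^p\binom{L}{s}\pi^s\le \sum_{s\ge1}s^p (L\pi)^s/s!$. Because $L\pi\le1$, this is at most $L\pi\sum_s s^p/s!=C_pL\pi$.

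Combining these, and using $h_d^p\le 2^{dp/2}$,
\[
        \norm P_p^p\le C_p\,\lambda^{p/2}L^{1-p/2}\,2^{dp/2}\,2^{-d}
        = C_p\,\lambda^{p/2}L^{1-p/2}\,(2^d)^{p/2-1}.
\]
Since $p\ge2$, the exponent $p/2-1$ is nonnegative, so we may insert $2^d<128B^2L/\lambda$. This gives
\[
        C_p\lambda^{p/2}L^{1-p/2}(128B^2L/\lambda)^{p/2-1}=C_p'\lambda B^{p-2}.
\]
The powers of $L$ cancel exactly, which is the point of the calibration \eqref{eq:d-choice-local}.

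The only step needing care is the moment bound for $K$. The binomial tail estimate above handles it cleanly because of the sparsity $L2^{-d}\le1$. Everything else is bookkeeping with \eqref{eq:d-choice-local}.
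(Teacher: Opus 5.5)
Your proof is correct, but it takes a different route from the paper's. The paper applies Rosenthal's inequality to the independent mean-zero summands $\sqrt{\lambda/L}\,\phi_d(2^{U+qD}x)$. There the variance term is $\lambda^{p/2}\le\lambda B^{p-2}$, and the moment term is $L(\lambda/L)^{p/2}\norm{\phi_d}_p^p\le C_pL(\lambda/L)^{p/2}2^{d(p/2-1)}$; then \eqref{eq:d-choice-local} cancels the powers of $L$ exactly as in your final step.

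You instead use the two-valued structure of the spike. You split off the deterministic floor $-\sqrt{\lambda L}\,g_d$ and reduce to the moments of a binomial count $K$. You control those by hand through $\binom{L}{s}\le L^s/s!$ and the sparsity $L2^{-d}\le 1$, which puts $K$ in the Poisson regime.

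What your route buys:
\begin{enumerate}[label=(\roman*)]
\item It is self-contained, with no appeal to Rosenthal.
\item It shows explicitly that the $L^p$ mass of the block sits on the rare event that some spike fires; the floor contributes only $O(\lambda^pB^{-p})$.
\item It extends to $1\le p<2$, provided you use the lower bound in \eqref{eq:d-choice-local} when $p/2-1<0$.
\end{enumerate}

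What the paper's route buys: it is shorter once Rosenthal is granted, and it does not depend on the indicator structure of the summands. It also reuses the same inequality that drives \cref{lem:lp-series}, so no separate moment computation is needed.
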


\begin{proof}
The summands
\[
        X_q(x)=\sqrt{\lambda/L}\,\phi_d(2^{U+qD}x),
        \qquad 1\le q\le L,
\]
are independent and mean zero.  For $p=2$ the estimate is immediate from
\eqref{eq:block-norm}.  Assume $p>2$.  Rosenthal's inequality
\cite{Rosenthal1970} gives
\[
        \norm{F}_p^p
        \le C_p\left(\Big(\sum_{q=1}^L\norm{X_q}_2^2\Big)^{p/2}
        +\sum_{q=1}^L\norm{X_q}_p^p\right).
\]
Since $\|X_q\|_2^2=(\lambda/L)\|\phi_d(2^{U+qD}\cdot)\|_2^2=\lambda/L$,
$0<\lambda\le1$, and $B\ge1$,
the first term is $C_p\lambda^{p/2}\le C_p\lambda B^{p-2}$.
Also
\[
        \norm{\phi_d}_p^p
        =2^{-d}h_d^p+(1-2^{-d})g_d^p
        \le C_p2^{d(p/2-1)}.
\]
Therefore, using \eqref{eq:d-choice-local},
\begin{align*}
        \sum_{q=1}^L\norm{X_q}_p^p
        &\le C_pL\left(\frac{\lambda}{L}\right)^{p/2}
             2^{d(p/2-1)}  \\
        &\le C_pL\left(\frac{\lambda}{L}\right)^{p/2}
             \left(\frac{B^2L}{\lambda}\right)^{p/2-1}
         = C_p\lambda B^{p-2}.
\end{align*}
This proves \eqref{eq:block-lp-bound}.
\end{proof}

We shall also use the following standard summability criterion.

\begin{lemma}[Independent $L^p$ summability]\label{lem:lp-series}
Let $2\le p<\infty$, and let $(G_k)$ be independent mean-zero functions on
$\T$.  If
\[
        \sum_k\norm{G_k}_2^2<\infty,
        \qquad
        \sum_k\norm{G_k}_p^p<\infty,
\]
then $\sum_kG_k$ converges in $L^p$.
\end{lemma}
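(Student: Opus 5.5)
The plan is to apply Rosenthal's inequality to the partial sums and then show that they form a Cauchy sequence in $L^p$. Set $\Phi_n=\sum_{k\le n}G_k$. For $m<n$ the difference $\Phi_n-\Phi_m=\sum_{m<k\le n}G_k$ is a finite sum of independent mean-zero random variables on the probability space $\T$. Each $G_k$ lies in $L^p$, because $\norm{G_k}_p^p$ is a term of a convergent series and is therefore finite. Rosenthal's inequality \cite{Rosenthal1970}, in the form already used in the proof of \cref{lem:block-lp}, then gives
\[
        \norm{\Phi_n-\Phi_m}_p^p
        \le C_p\left(\Big(\sum_{m<k\le n}\norm{G_k}_2^2\Big)^{p/2}
        +\sum_{m<k\le n}\norm{G_k}_p^p\right).
\]

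Both terms on the right are tails of convergent series. As $m,n\to\infty$ they tend to zero, and since $p/2\ge1$ the first term tends to zero as well. Hence $(\Phi_n)$ is Cauchy in $L^p(\T)$. Completeness of $L^p$ then produces a limit $G\in L^p$, and this limit is by definition the sum $\sum_kG_k$ in $L^p$. The case $p=2$ needs no Rosenthal inequality: orthogonality of independent mean-zero functions gives $\norm{\Phi_n-\Phi_m}_2^2=\sum_{m<k\le n}\norm{G_k}_2^2$ directly.

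The only point needing care is the meaning of independence. The $G_k$ are measurable functions on $\T$ that are independent as random variables under Lebesgue measure, which is exactly the setting of Rosenthal's inequality. In the applications the $G_k$ are blocks supported on disjoint binary digit windows, so they are independent in this sense.

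If one prefers to avoid quoting Rosenthal, a symmetrization argument works instead: Khintchine's inequality combined with the convexity estimate $\big(\sum|G_k|^2\big)^{p/2}$ controlled by the two sums. Quoting Rosenthal is simpler, so I would not expect any step of this argument to be a real obstacle.
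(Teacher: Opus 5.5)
Your proposal is correct and matches the paper's proof: Rosenthal's inequality applied to $\sum_{m<k\le n}G_k$ shows that the partial sums are Cauchy in $L^p$, and the case $p=2$ follows from orthogonality. The closing symmetrization aside is not needed, and it is loosely stated: bounding $\norm{(\sum_k|G_k|^2)^{1/2}}_p^p$ by the two sums is itself a Rosenthal-type inequality for the nonnegative independent variables $|G_k|^2$, not a mere convexity estimate.
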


\begin{proof}
For $p=2$ this is just the Hilbert-space Cauchy criterion.  Assume $p>2$.
For $m\le n$, Rosenthal's inequality \cite{Rosenthal1970} gives
\[
        \norm{\sum_{k=m}^nG_k}_p^p
        \le C_p\left(\Big(\sum_{k=m}^n\norm{G_k}_2^2\Big)^{p/2}
        +\sum_{k=m}^n\norm{G_k}_p^p\right).
\]
The right-hand side tends to zero as $m,n\to\infty$.  Hence the partial sums
are Cauchy in $L^p$.
\end{proof}

A trial is specified by an integer starting offset $M$ and a length $\ell$,
where $1\le\ell\le L/8$.  We assume
\begin{equation}\label{eq:local-nonnegative-exponents}
        M+D\ge0,
\end{equation}
so that every dilation appearing below is an integer endomorphism of $\T$.  It
uses the exponent block
\begin{equation}\label{eq:trial-local}
        M+D,\, M+2D,\ldots, M+\ell D.
\end{equation}
Define
\begin{equation}\label{eq:Zh-def}
        Z_h(x)=\phi_d(2^{U+M+hD}x),
        \qquad 2\le h\le L+\ell.
\end{equation}
Then
\begin{equation}\label{eq:convolution-identity}
        \sum_{r=1}^{\ell}F(2^{M+rD}x)
        =\sqrt\frac{\lambda}{L}
        \sum_{h=2}^{L+\ell} w_h Z_h(x),
\end{equation}
where
\[
        w_h=\#\{(q,r):1\le q\le L,\,1\le r\le\ell,\,q+r=h\}.
\]
Always $0\le w_h\le\ell$, and in the central range
\begin{equation}\label{eq:central-weights}
        w_h=\ell,
        \qquad \ell+1\le h\le L+1.
\end{equation}

\begin{definition}[Good trial event]\label{def:local-good}
The good event $\mathcal E(M,\ell)$ is the event that at least one central variable
$Z_h$, $\ell+1\le h\le L+1$, equals $h_d$.
\end{definition}

\begin{lemma}[Local amplification]\label{lem:local-amplification}
There is an absolute constant $c_0>0$ such that, for every block parameter set,
every integer $M$ satisfying \eqref{eq:local-nonnegative-exponents}, and every
$1\le\ell\le L/8$,
\begin{equation}\label{eq:local-prob}
        \Pr(\mathcal E(M,\ell))\ge c_0\frac{\lambda}{B^2}.
\end{equation}
On $\mathcal E(M,\ell)$ one has
\begin{equation}\label{eq:local-signal}
        \sum_{r=1}^{\ell}F(2^{M+rD}x)
        \ge 2B\ell.
\end{equation}
\end{lemma}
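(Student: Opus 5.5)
The plan has two independent parts: a probability lower bound for the good event, and a deterministic lower bound for the trial sum on that event.

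\emph{Probability bound.} The central variables are $Z_h=\phi_d(2^{U+M+hD}x)$ for $\ell+1\le h\le L+1$. There are $L-\ell+1\ge 7L/8$ of them. Their dilation exponents $U+M+hD$ are nonnegative, by \eqref{eq:local-nonnegative-exponents} and since $U\ge0$ and $h\ge2$. Consecutive exponents differ by $D\ge d+2>d$. By \cref{lem:spike-basic}(b) these variables are independent, and by \cref{lem:spike-basic}(a) each equals $h_d$ with probability $p=2^{-d}$. Hence
\[
\Pr(\mathcal E(M,\ell))=1-(1-p)^{n}\ge 1-e^{-np},\qquad n\ge 7L/8 .
\]
By \eqref{eq:d-choice-local},
\[
np\ge \frac{7L}{8}\cdot\frac{\lambda}{128B^2L}=\frac{7\lambda}{1024B^2}\le 1 .
\]
The last inequality holds because $\lambda\le1$ and $B\ge100$. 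Using $1-e^{-t}\ge(1-e^{-1})t$ for $t\in[0,1]$, we get \eqref{eq:local-prob} with $c_0=(1-e^{-1})\cdot 7/1024$.

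\emph{Signal bound.} We start from the convolution identity \eqref{eq:convolution-identity}. Fix a central index $h_0$ with $Z_{h_0}=h_d$. Every other $Z_h$ is at least $-g_d$, and the weights satisfy $0\le w_h\le\ell$. The number of $h$ is at most $L+\ell-1\le 2L$, and $\sum_h w_h=L\ell$. So
\[
\sum_{r=1}^{\ell}F(2^{M+rD}x)\ge \sqrt{\tfrac{\lambda}{L}}\bigl(\ell h_d-(\ell+g_d)\textstyle\sum_h w_h\cdot\tfrac{g_d}{\ell+g_d}\bigr).
\]
More simply, it is bounded below by
\[
\sqrt{\tfrac{\lambda}{L}}\,\ell\,(h_d+g_d)-\sqrt{\tfrac{\lambda}{L}}\,L\ell\, g_d .
\]
Here the spike contributes $w_{h_0}(h_d+g_d)$ on top of the floor $-g_d\sum_h w_h$.

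We now estimate the two terms using \eqref{eq:d-choice-local}. Since $h_d\ge 2^{d/2}/\sqrt2$, the main term satisfies
\[
\sqrt{\lambda/L}\,\ell h_d\ge \ell\sqrt{\lambda/L}\cdot 8B\sqrt{L/\lambda}/\sqrt2\ge 5B\ell .
\]
The error term is
\[
\ell\sqrt{\lambda L}\,g_d\le \ell\sqrt{\lambda L}\,\sqrt2\cdot 2^{-d/2}\le \ell\sqrt2\,\lambda/(8B)\le \ell\lambda/B\le \ell ,
\]
which is at most $B\ell$ since $B\ge100$. Hence on $\mathcal E(M,\ell)$ the sum is at least $4B\ell\ge2B\ell$, which is \eqref{eq:local-signal}.

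The only delicate points are the bookkeeping ones: that the central range really has weight exactly $\ell$, which is \eqref{eq:central-weights}, and that the negative mass $\sum_h w_h g_d$ is controlled by the same lower-floor computation as in \cref{lem:block-basic}. Neither is a real obstacle. The constants $64$ and $128$ in \eqref{eq:d-choice-local} leave ample room.
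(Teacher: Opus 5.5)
Your proposal is correct and follows the paper's proof. It uses independence of the $L-\ell+1\ge 7L/8$ central spikes via \cref{lem:spike-basic} with a linear lower bound for $1-(1-p)^{n}$, and the deterministic estimate $\sqrt{\lambda/L}\,(\ell h_d-L\ell g_d)$ from $w_{h_0}=\ell$, $\sum_h w_h=L\ell$ and $Z_h\ge -g_d$, combined with \eqref{eq:d-choice-local}. The differences are cosmetic: you bound $np$ below by $7\lambda/(1024B^2)\le1$ and use monotonicity plus $1-e^{-t}\ge(1-e^{-1})t$, where the paper uses $n_cp\le 1/64$ and $1-(1-p)^{n_c}\ge\tfrac12 n_cp$; and your first signal display is just a roundabout rewriting of the same bound.
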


\begin{proof}
The central variables are independent by \cref{lem:spike-basic}, because their
digit windows have length $d$ and are separated by $D\ge d+2$.  Let
$p=2^{-d}$.  By \eqref{eq:d-choice-local},
\begin{equation}\label{eq:p-bounds}
        \frac{\lambda}{128B^2L}<p\le \frac{\lambda}{64B^2L}.
\end{equation}
The number of central indices is $n_c=L-\ell+1\ge7L/8$.  Since
$n_cp\le \lambda/(64B^2)\le1/64$, we have
\[
        \Pr(\mathcal E(M,\ell))=1-(1-p)^{n_c}
        \ge \frac{1}{2}n_cp
        \ge c_0\frac{\lambda}{B^2}.
\]

Suppose $Z_{h_0}=h_d$ for some central $h_0$.  Using
$\sum_h w_h=L\ell$, $w_{h_0}=\ell$, and $Z_h\ge -g_d$ for all $h$,
\begin{align*}
\sum_{r=1}^{\ell}F(2^{M+rD}x)
&\ge \sqrt\frac{\lambda}{L}
     \left(\ell h_d-L\ell g_d\right)  \\
&=\ell\left(\sqrt\frac{\lambda}{L}h_d-
             \sqrt{\lambda L}\,g_d\right).
\end{align*}
The choice \eqref{eq:d-choice-local} implies
\[
        \sqrt\frac{\lambda}{L}\,h_d
        \ge \frac{1}{\sqrt2}\sqrt{\frac{\lambda2^d}{L}}
        \ge 4\sqrt2\,B,
\]
where we used $h_d=\sqrt{2^d-1}\ge 2^{d/2}/\sqrt2$.  It also implies
\[
        \sqrt{\lambda L}\,g_d
        \le \sqrt{\frac{2\lambda L}{2^d}}
        \le \frac{\lambda}{4\sqrt2 B}
        \le B.
\]
Thus the expression in parentheses is at least $2B$, proving
\eqref{eq:local-signal}.
\end{proof}

\section{The master construction}\label{sec:master}

This section assembles the local trials into a global function and a global
dyadic lacunary sequence.  The stage parameters are deliberately left flexible:
$\lambda_k$, $B_k$, and the number of trials $T_k$ will be chosen differently in
\cref{sec:endpoint,sec:large-sums}.

Fix
\begin{equation}\label{eq:eta}
        \eta=\frac{1}{20}.
\end{equation}
This small constant fixes the scale on which each trial length will dominate
the number of exponents already selected.

At the beginning of stage $k$, suppose stages $1,\ldots,k-1$ have been fixed.
We keep four bookkeeping quantities.  Let $P_{k,1}$ be the number of selected
exponents before stage $k$; $P_{1,1}=0$.  Let $m^{\rm last}_{k-1}$ be the largest
selected exponent before stage $k$, with $m^{\rm last}_0=0$.  Let
$V_{k-1}^{\max}$ be the largest dyadic valuation used by previous spike blocks,
with $V_0^{\max}=0$.  Finally let $\Omega_{k-1}$ be the largest binary digit
coordinate used by the good-trial events from previous stages, with
$\Omega_0=0$.

At stage $k$ choose
\begin{equation}\label{eq:stage-free-parameters}
        0<\lambda_k\le1,
        \qquad B_k\ge B_0,
        \qquad T_k\ge1.
\end{equation}
Here $\lambda_k$ is the squared $L^2$ cost assigned to the stage, $B_k$ is the
target signal height, and $T_k$ is the number of trials run at
the stage.

The trial lengths are defined recursively by
\begin{equation}\label{eq:length-recursion}
        \ell_{k,t}=\left\lceil \eta^{-1}(P_{k,t}+1)\right\rceil,
        \qquad P_{k,t+1}=P_{k,t}+\ell_{k,t}
        \quad(1\le t\le T_k).
\end{equation}
Thus each trial is chosen long compared with the number $P_{k,t}$ of exponents
already selected, and $P_{k,t+1}$ is the updated count after adding that trial.

Set
\begin{equation}\label{eq:Nstar-def}
        N_k^*=P_{k,T_k+1},
        \qquad
        L_k=8\max_{1\le t\le T_k}\ell_{k,t}.
\end{equation}
Here $N_k^*$ is the total number of selected exponents after stage $k$, while
$L_k$ is a block length chosen uniformly larger than every trial length in the
stage.

Choose $d_k$ by
\begin{equation}\label{eq:dk-choice}
        64\frac{B_k^2L_k}{\lambda_k}
        \le 2^{d_k}
        <128\frac{B_k^2L_k}{\lambda_k},
\end{equation}
so that the normalized spike height
$\sqrt{\lambda_k/L_k}\,2^{d_k/2}$ is comparable to $B_k$, up to absolute
constants.

Put
\begin{equation}\label{eq:D-U-choice}
        D_k=d_k+2,
        \qquad U_k=V_{k-1}^{\max}+1.
\end{equation}
The spacing $D_k$ leaves a two-coordinate gap between adjacent depth-$d_k$
windows, and $U_k$ starts the new Fourier valuation bands just after the
previous ones.

The $k$th block is
\begin{equation}\label{eq:Fk-def}
        F_k(x)=\sqrt{\frac{\lambda_k}{L_k}}
        \sum_{q=1}^{L_k}\phi_{d_k}(2^{U_k+qD_k}x).
\end{equation}
This is a sum of $L_k$ separated spikes, normalized so that the block has
squared $L^2$ norm $\lambda_k$.

Its dyadic valuation set is the finite union of intervals
\begin{equation}\label{eq:Vk-def}
        \mathcal V_k=\bigcup_{q=1}^{L_k}
        [U_k+qD_k,\,U_k+qD_k+d_k-1]\subset\Z.
\end{equation}
This records exactly the dyadic valuation bands on which $\widehat{F_k}$ may be
nonzero.

These sets are pairwise disjoint across all stages by construction.  Define
\begin{equation}\label{eq:Vmax-update}
        V_k^{\max}=U_k+L_kD_k+d_k-1,
        \qquad
        Q_k=2^{U_k+L_kD_k+d_k+2}.
\end{equation}
The first quantity records the last valuation used by the stage, and $Q_k$ is
the corresponding Fourier-tail threshold used later.

It remains to place the trials in the exponent sequence.  Choose $M_{k,1}$ so
large that
\begin{equation}\label{eq:first-start-conditions}
        M_{k,1}+D_k>m^{\rm last}_{k-1},
        \qquad
        U_k+M_{k,1}+D_k+1>\Omega_{k-1}.
\end{equation}
The first inequality puts the new exponents after the previous stage, while the
second puts the first new digit window beyond all previously used good-event
digit windows.

For $1\le t<T_k$, define
\begin{equation}\label{eq:start-recursion}
        M_{k,t+1}=M_{k,t}+(L_k+\ell_{k,t})D_k+d_k+2.
\end{equation}
This increment skips past the digit range generated by trial $t$, ensuring that
different trials use disjoint digit windows.

The $t$th trial contributes the exponent interval
\begin{equation}\label{eq:Ikt-def}
        \mathcal I_{k,t}=\{M_{k,t}+D_k,\,M_{k,t}+2D_k,\ldots,
        M_{k,t}+\ell_{k,t}D_k\}.
\end{equation}
These are the selected exponents added by the trial; their spacing matches the
spacing of the layers in $F_k$.

The recursion ensures that all selected exponents are strictly increasing.  At
the end of stage $k$ put
\begin{equation}\label{eq:stage-updates}
        m_k^{\rm last}=M_{k,T_k}+\ell_{k,T_k}D_k,
        \qquad
        P_{k+1,1}=N_k^*,
\end{equation}
recording both the last selected exponent in the stage and the count that is
passed to the next stage.

Finally let
\begin{equation}\label{eq:Omega-update}
        \Omega_k=U_k+M_{k,T_k}+(L_k+\ell_{k,T_k})D_k+d_k.
\end{equation}
This records an upper bound for the binary digit coordinates used by all
good-trial events through stage $k$.  The choice of $\Omega_k$ is slightly
larger than needed for the good events, but it makes independence transparent.

The next lemma records the elementary bookkeeping consequences of the length
recursion.  The point of choosing $\ell_{k,t}$ proportional to $P_{k,t}+1$ is
that, at a trial endpoint, the newly added $\ell_{k,t}$ exponents dominate the
$P_{k,t}$ exponents already chosen; this will let a successful trial control the
full average up to $N_{k,t}$.  The final estimate gives a uniform exponential
upper bound, in the number of trials, for the total number of exponents produced
by the stage.

\begin{lemma}[Length recursion]\label{lem:length-recursion}
For every stage $k$ and every $1\le t\le T_k$,
\begin{equation}\label{eq:P-ell-ratio}
        P_{k,t}\le \eta\ell_{k,t},
        \qquad
        N_{k,t}\coloneqq P_{k,t}+\ell_{k,t}\le(1+\eta)\ell_{k,t}.
\end{equation}
Moreover there is a constant $C_\eta>1$, depending only on $\eta$, such that
\begin{equation}\label{eq:Nstar-growth-general}
        1+N_k^*\le (1+P_{k,1})C_\eta^{T_k}.
\end{equation}
\end{lemma}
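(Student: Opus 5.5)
The two inequalities in \eqref{eq:P-ell-ratio} come straight from the definition \eqref{eq:length-recursion}, and \eqref{eq:Nstar-growth-general} follows by iterating a one-step growth bound. For the first inequality, since $\ell_{k,t}=\lceil\eta^{-1}(P_{k,t}+1)\rceil\ge\eta^{-1}(P_{k,t}+1)$, we get $\eta\ell_{k,t}\ge P_{k,t}+1> P_{k,t}$. Adding $\ell_{k,t}$ to both sides then gives $N_{k,t}=P_{k,t}+\ell_{k,t}\le(1+\eta)\ell_{k,t}$. No case analysis is needed; the only point to note is that $P_{k,t}\ge0$ for every $t$, which holds by induction from $P_{1,1}=0$ and the update rule \eqref{eq:stage-updates}.

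For the growth bound, I will show that $1+P_{k,t+1}\le C_\eta(1+P_{k,t})$ for every $1\le t\le T_k$. We have
\[
  1+P_{k,t+1}=1+P_{k,t}+\ell_{k,t}
  \le 1+P_{k,t}+\eta^{-1}(P_{k,t}+1)+1 .
\]
The extra $+1$ comes from the ceiling. Since $1+P_{k,t}\ge1$, the final $+1$ is at most $1+P_{k,t}$. Therefore
\[
  1+P_{k,t+1}\le(2+\eta^{-1})(1+P_{k,t}),
\]
and we may take $C_\eta=2+\eta^{-1}$, which equals $22$ for $\eta=1/20$. Iterating this bound from $t=1$ to $t=T_k$ and using $N_k^*=P_{k,T_k+1}$ from \eqref{eq:Nstar-def} yields $1+N_k^*\le(1+P_{k,1})C_\eta^{T_k}$.

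There is no real obstacle here. The only thing to be careful about is absorbing the rounding in the ceiling, and the lower bound $1+P_{k,t}\ge1$ handles that uniformly, including the first stage where $P_{1,1}=0$.
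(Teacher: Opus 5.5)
Your proposal is correct and follows the paper's proof: both inequalities in \eqref{eq:P-ell-ratio} come from the ceiling bound $\ell_{k,t}\ge\eta^{-1}(P_{k,t}+1)$, and \eqref{eq:Nstar-growth-general} comes from iterating $1+P_{k,t+1}\le C_\eta(1+P_{k,t})$. The only difference is that you give the explicit constant $C_\eta=2+\eta^{-1}$, which the paper leaves implicit.
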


\begin{proof}
The first inequality follows immediately from
$\ell_{k,t}\ge\eta^{-1}(P_{k,t}+1)$.  The second follows by adding
$\ell_{k,t}$.  Also
\[
        1+P_{k,t+1}=1+P_{k,t}+\ell_{k,t}
        \le C_\eta(1+P_{k,t})
\]
for a constant $C_\eta$ depending only on $\eta$.  Iterating gives
\eqref{eq:Nstar-growth-general}.
\end{proof}

For each trial let $\mathcal E_{k,t}$ denote the local good event from
\cref{def:local-good}, formed with the block $F_k$ and the trial
$(M_{k,t},\ell_{k,t})$.

\begin{lemma}[Independence and success probability]\label{lem:trial-prob-master}
The events $\mathcal E_{k,t}$, over all pairs $(k,t)$, are independent.  Moreover
\begin{equation}\label{eq:trial-prob-master}
        \Pr(\mathcal E_{k,t})\ge c_0\frac{\lambda_k}{B_k^2},
\end{equation}
where $c_0$ is the constant from \cref{lem:local-amplification}.  Consequently,
for the stage event $S_k=\bigcup_{t=1}^{T_k}\mathcal E_{k,t}$,
\begin{equation}\label{eq:stage-failure}
        \Pr(S_k^c)
        \le \exp\left(-c_0T_k\frac{\lambda_k}{B_k^2}\right).
\end{equation}
\end{lemma}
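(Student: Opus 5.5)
The plan is to reduce everything to the digit-window description in \cref{lem:spike-basic}. First I identify exactly which binary digit coordinates determine $\mathcal E_{k,t}$. By \cref{def:local-good}, $\mathcal E_{k,t}$ depends only on the central variables $Z_h(x)=\phi_{d_k}(2^{U_k+M_{k,t}+hD_k}x)$ with $\ell_{k,t}+1\le h\le L_k+1$. By the proof of \cref{lem:spike-basic}, each such variable depends only on the digit window $U_k+M_{k,t}+hD_k+1,\dots,U_k+M_{k,t}+hD_k+d_k$. Hence $\mathcal E_{k,t}$ is measurable with respect to the digits in
\[
        W_{k,t}=\bigl[\,U_k+M_{k,t}+(\ell_{k,t}+1)D_k+1,\; U_k+M_{k,t}+(L_k+1)D_k+d_k\,\bigr].
\]
To avoid fiddling with the lower endpoint, I will use the larger interval starting at $U_k+M_{k,t}+D_k+1$.

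Next I check that these intervals are pairwise disjoint.

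\emph{Within a stage.} By \eqref{eq:start-recursion}, the lower endpoint for trial $t+1$ is
\[
        U_k+M_{k,t}+(L_k+\ell_{k,t})D_k+d_k+2+D_k+1 .
\]
This exceeds the upper endpoint $U_k+M_{k,t}+(L_k+1)D_k+d_k$ of $W_{k,t}$, because $\ell_{k,t}\ge1$. The windows in a stage increase in $t$, so all of them are disjoint.

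\emph{Across stages.} The upper endpoint of $W_{k,T_k}$ is at most $\Omega_k$ as defined in \eqref{eq:Omega-update}, again using $\ell_{k,T_k}\ge1$. Since the windows increase in $t$ and $\Omega_{k-1}$ is increasing in $k$, every window of every stage $k'<k$ lies in $[1,\Omega_{k-1}]$. The second condition in \eqref{eq:first-start-conditions} then places the first window of stage $k$, and hence all later ones, strictly beyond $\Omega_{k-1}$.

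Because distinct binary digits of a uniformly random $x$ are independent Bernoulli variables, events determined by disjoint sets of digits are independent. Every finite subfamily of the $\mathcal E_{k,t}$ is therefore independent, which is the independence claim.

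For \eqref{eq:trial-prob-master}, I apply \cref{lem:local-amplification} with the block parameters $(\lambda_k,B_k,L_k,d_k,D_k,U_k)$ and the trial $(M_{k,t},\ell_{k,t})$. The hypotheses hold as follows:
\begin{itemize}
\item \eqref{eq:d-choice-local} is \eqref{eq:dk-choice};
\item $D_k=d_k+2$;
\item $\ell_{k,t}\le L_k/8$ by \eqref{eq:Nstar-def};
\item $M_{k,t}+D_k\ge0$, since $M_{k,1}+D_k>m^{\rm last}_{k-1}\ge0$ and the starts increase.
\end{itemize}

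Finally, independence within stage $k$ gives
\[
        \Pr(S_k^c)=\prod_t\bigl(1-\Pr(\mathcal E_{k,t})\bigr)\le\prod_t e^{-c_0\lambda_k/B_k^2}=\exp\!\Bigl(-c_0T_k\frac{\lambda_k}{B_k^2}\Bigr),
\]
using $1-u\le e^{-u}$.

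The only delicate point is the cross-stage disjointness, since it relies on $\Omega_k$ really dominating every window used in stage $k$. This comes down to the endpoint comparison above, and I expect it to be routine.
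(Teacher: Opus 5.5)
Your proof is correct and follows the paper's argument. You identify the digit windows that determine each $\mathcal E_{k,t}$, separate them within a stage via the start recursion and across stages via the $\Omega_k$ bookkeeping and the second start condition, then quote the local amplification lemma and use $1-u\le e^{-u}$, with more explicit endpoint and hypothesis checks than the paper gives. The one step you assert without justification, that $\Omega_k$ increases in $k$, follows at once from $\Omega_k\ge U_k+M_{k,1}+D_k+1>\Omega_{k-1}$.
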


\begin{proof}
The event $\mathcal E_{k,t}$ depends only on digit windows
\[
        [U_k+M_{k,t}+hD_k+1,\,U_k+M_{k,t}+hD_k+d_k],
        \qquad \ell_{k,t}+1\le h\le L_k+1.
\]
The initial condition \eqref{eq:first-start-conditions} places the first such
window of stage $k$ beyond all windows used in earlier stages.  The recursion
\eqref{eq:start-recursion} places all windows of trial $t+1$ beyond all windows
of trial $t$.  Hence all good-trial events depend on disjoint binary digit
coordinates, and are independent.  The lower bound \eqref{eq:trial-prob-master}
is exactly \cref{lem:local-amplification}.  The estimate
\eqref{eq:stage-failure} follows from independence and $1-u\le e^{-u}$.
\end{proof}

\begin{proposition}[Master principle]\label{prop:master}
Assume that stages are constructed as above and that
\begin{equation}\label{eq:lambda-summable-master}
        \sum_{k=1}^{\infty}\lambda_k<\infty.
\end{equation}
Let $(m_j)$ be the increasing enumeration of all selected exponents
$\bigcup_{k,t}\mathcal I_{k,t}$, and set $n_j=2^{m_j}$.  Let
\begin{equation}\label{eq:f-def-master}
        f=\sum_{k=1}^{\infty}F_k.
\end{equation}
Then $f$ converges in $L^2(\T)$ and almost everywhere to a real-valued mean-zero
function in $L^2(\T)$, and $(n_j)$ is lacunary with $n_{j+1}/n_j\ge2$.
Moreover, if for some fixed $2\le p<\infty$,
\begin{equation}\label{eq:lp-summable-master}
        \sum_{k=1}^{\infty}\lambda_kB_k^{p-2}<\infty,
\end{equation}
then the same series converges in $L^p(\T)$ and $f\in L^p(\T)$.

There is a finite constant
\begin{equation}\label{eq:mu-def}
        \mu=C_3\sum_{k=1}^{\infty}\frac{\lambda_k}{B_k}<\infty
\end{equation}
such that, on a set of full measure, every successful trial satisfies
\begin{equation}\label{eq:master-signal}
        \frac{1}{N_{k,t}}
        \sum_{j\le N_{k,t}} f(n_jx)
        \ge B_k-\mu.
\end{equation}
Consequently:
\begin{enumerate}[label=(\roman*)]
\item if $\sum_k \Pr(S_k)=\infty$, then almost surely there are infinitely many
stages $k$ for which some successful trial $t$ satisfies
\eqref{eq:master-signal};
\item if $\sum_k\Pr(S_k^c)<\infty$, then almost surely, for every sufficiently
large stage $k$, some successful trial $t$ satisfies \eqref{eq:master-signal}.
\end{enumerate}
\end{proposition}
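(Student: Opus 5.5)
The plan is to treat the qualitative claims (convergence, lacunarity, $L^p$ membership) first, and then derive the signal estimate \eqref{eq:master-signal} purely from the deterministic lower floor \eqref{eq:block-lower-floor} together with local amplification.

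\textbf{Convergence and lacunarity.}
\begin{itemize}
\item \emph{$L^2$ convergence.} By construction the valuation sets $\mathcal V_k$ are pairwise disjoint, so the blocks $F_k$ are orthogonal. By \eqref{eq:block-norm} each has mean zero and $\norm{F_k}_2^2=\lambda_k$. Hence \eqref{eq:lambda-summable-master} gives $L^2$ convergence to a real mean-zero limit.
\item \emph{Almost everywhere convergence.} $F_k$ depends only on binary digits in the windows $[U_k+qD_k+1,U_k+qD_k+d_k]$. These lie in $[U_k+1,V_k^{\max}+1]$, and $U_k=V_{k-1}^{\max}+1$ places these ranges essentially consecutively; I will check they are disjoint, shifting by one if needed. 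Therefore the $F_k$ are independent mean-zero random variables with $\sum_k\operatorname{Var}F_k<\infty$, and the Khintchine--Kolmogorov one-series theorem gives a.e. convergence, with the same limit as in $L^2$.
\item \emph{$L^p$ convergence.} Under \eqref{eq:lp-summable-master}, \cref{lem:block-lp} gives $\sum_k\norm{F_k}_p^p<\infty$. Independence then lets \cref{lem:lp-series} apply.
\item \emph{Lacunarity.} By \eqref{eq:first-start-conditions} and \eqref{eq:start-recursion}, the selected exponents are strictly increasing integers. Hence $n_{j+1}/n_j=2^{m_{j+1}-m_j}\ge2$.
\end{itemize}

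\textbf{The signal estimate.} Let $G$ be the set of $x$ at which $\sum_iF_i(n_jx)$ converges to $f(n_jx)$ for every $j$. Each map $x\mapsto n_jx$ preserves measure and there are countably many $j$, so $G$ has full measure; after also removing the null set of ambiguous binary expansions, we work on $G$. Fix a successful trial, $x\in\mathcal E_{k,t}\cap G$. Summing the floors \eqref{eq:block-lower-floor} over all blocks gives $f(y)\ge-\mu$ at every $y=n_jx$, which handles the $P_{k,t}$ indices $j$ preceding the trial. For the $\ell_{k,t}$ indices in $\mathcal I_{k,t}$, write
\[
f(n_jx)=F_k(n_jx)+\sum_{i\ne k}F_i(n_jx)
\qquad\text{with}\qquad
\sum_{i\ne k}F_i(n_jx)\ge-\mu .
\]
The trial sum of $F_k$ is at least $2B_k\ell_{k,t}$ by \cref{lem:local-amplification}. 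Its hypotheses hold: $\ell_{k,t}\le L_k/8$ by \eqref{eq:Nstar-def}, and $M_{k,t}+D_k\ge0$ since the exponents exceed $m^{\rm last}_{k-1}\ge0$. Adding the two groups,
\[
\sum_{j\le N_{k,t}}f(n_jx)\ge 2B_k\ell_{k,t}-N_{k,t}\,\mu .
\]
Dividing by $N_{k,t}\le(1+\eta)\ell_{k,t}$ from \cref{lem:length-recursion} gives at least $2B_k/(1+\eta)-\mu\ge B_k-\mu$, which is \eqref{eq:master-signal}.

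\textbf{Consequences.} By \cref{lem:trial-prob-master} the $\mathcal E_{k,t}$ are independent across all $(k,t)$. So the stage events $S_k$, being functions of disjoint families, are independent. The second Borel--Cantelli lemma then yields (i), and the first Borel--Cantelli lemma applied to $S_k^c$ yields (ii).

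\textbf{Main obstacle.} The delicate point is the passage from $L^2$ convergence to a genuinely pointwise identity $f(n_jx)=\sum_iF_i(n_jx)$, valid simultaneously for all $j$, so that the floor can be summed termwise. This rests on verifying digit-disjointness of the blocks in order to invoke the independent one-series theorem.
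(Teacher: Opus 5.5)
Your proposal is correct and follows essentially the same route as the paper: orthogonality of the valuation bands gives $L^2$ convergence, digit-disjointness plus Kolmogorov's criterion gives a.e.\ convergence, and \cref{lem:lp-series} gives $L^p$ convergence; the partial sum at $N_{k,t}$ is then split using the summed lower floors for $f$ and $f-F_k$ (pulled back under the countably many dyadic maps) together with \cref{lem:local-amplification}, and Borel--Cantelli finishes. The only item you leave implicit is that $\mu<\infty$, which is immediate from $B_k\ge B_0$ and $\sum_k\lambda_k<\infty$.
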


\begin{proof}
The valuation sets $\mathcal V_k$ are disjoint.  Hence the blocks $F_k$ are
orthogonal, and \cref{lem:block-basic} gives
$\norm{F_k}_2^2=\lambda_k$.  Thus \eqref{eq:lambda-summable-master} gives
$L^2$ convergence to a real-valued mean-zero function.  Since, after the removal of a null set, the blocks depend on disjoint finite
collections of binary digits, the random variables $F_k$ are independent and
mean zero.  Moreover
\[
        \sum_k \operatorname{Var}(F_k)=\sum_k \norm{F_k}_2^2
        =\sum_k\lambda_k<\infty.
\]
Kolmogorov's convergence criterion for independent mean-zero random variables
with summable variances therefore gives almost-everywhere convergence of
$\sum_k F_k$.

If \eqref{eq:lp-summable-master} holds for some $2\le p<\infty$, then
\cref{lem:block-lp} gives $\sum_k\norm{F_k}_p^p<\infty$.  Together with
$\sum_k\lambda_k<\infty$, \cref{lem:lp-series} shows convergence in
$L^p(\T)$.

The exponent intervals were placed in strictly increasing order, so the
increasing enumeration $(m_j)$ satisfies $m_{j+1}\ge m_j+1$.  Hence
$n_{j+1}/n_j\ge2$.

Since $B_k\ge B_0$, the quantity $\mu$ in \eqref{eq:mu-def} is finite.
By \eqref{eq:block-lower-floor}, each block satisfies
$F_k\ge -C_3\lambda_k/B_k$.  Hence every finite partial sum satisfies
\[
        \sum_{i\le K}F_i(y)
        \ge -\sum_{i\le K}C_3\frac{\lambda_i}{B_i}
        \ge -\mu.
\]
On the full-measure set where $\sum_iF_i(y)$ converges, passing to the limit
gives $f(y)\ge-\mu$.  Applying the same argument to each omitted series
$\sum_{i\ne k}F_i(y)$ and then intersecting over the countably many values of
$k$, we also have $f(y)-F_k(y)\ge-\mu$ for every $k$ on a common full-measure
set.  Pulling this set back under the countably many dyadic maps
$x\mapsto2^{m_j}x$, we may use these pointwise lower bounds at every selected
dilation for almost every $x$.

Fix such an $x$, and suppose $\mathcal E_{k,t}$ occurs.  At the endpoint
$N_{k,t}=P_{k,t}+\ell_{k,t}$,
\begin{align*}
\sum_{j\le N_{k,t}} f(2^{m_j}x)
&= \sum_{r=1}^{\ell_{k,t}}F_k(2^{M_{k,t}+rD_k}x)  \\
&\quad +\sum_{m_j<M_{k,t}+D_k} f(2^{m_j}x) \\
&\quad +\sum_{r=1}^{\ell_{k,t}}
      \bigl(f-F_k\bigr)(2^{M_{k,t}+rD_k}x).
\end{align*}
The first term is at least $2B_k\ell_{k,t}$ by
\cref{lem:local-amplification}.  The second has $P_{k,t}$ terms and is bounded
below by $-\mu P_{k,t}$; the third has $\ell_{k,t}$ terms and is bounded below
by $-\mu\ell_{k,t}$.  Hence
\[
        \sum_{j\le N_{k,t}} f(2^{m_j}x)
        \ge 2B_k\ell_{k,t}-\mu P_{k,t}-\mu\ell_{k,t}.
\]
Since $N_{k,t}=P_{k,t}+\ell_{k,t}$, the two error terms combine exactly as
$-\mu N_{k,t}$, and hence
\[
        \sum_{j\le N_{k,t}} f(2^{m_j}x)
        \ge 2B_k\ell_{k,t}-\mu N_{k,t}.
\]
Dividing by $N_{k,t}$ and using \cref{lem:length-recursion} gives
\[
        \frac{1}{N_{k,t}}\sum_{j\le N_{k,t}} f(2^{m_j}x)
        \ge 2B_k\frac{\ell_{k,t}}{N_{k,t}}-\mu
        \ge \frac{2B_k}{1+\eta}-\mu.
\]
Since $\eta=1/20$, the last quantity is at least $B_k-\mu$, which proves
\eqref{eq:master-signal}.

The stage events $S_k$ are independent by \cref{lem:trial-prob-master}.  If
$S_k$ occurs, then at least one trial $\mathcal E_{k,t}$ occurs, and the estimate just
proved applies to that trial.  Part (i) follows from the second
Borel--Cantelli lemma, and part (ii) follows from the first Borel--Cantelli
lemma.
\end{proof}

\section{Fourier tails and the endpoint construction}\label{sec:endpoint}

We now choose the free parameters in the master construction to prove
\cref{thm:endpoint}.  The key scale is
\[
        T_k\asymp \lambda_k^{-1}.
\]
Then the number of selected exponents at stage $k$ is exponential in
$\lambda_k^{-1}$, while the Fourier threshold is double-exponential in
$\lambda_k^{-1}$.  Thus $\lambda_k\asymp1/\log\log Q_k$, which is the endpoint
Fourier-tail balance.

\begin{lemma}[Endpoint scale control]\label{lem:endpoint-scale}
Fix $\Gamma\ge1$.  There are constants $0<c_\Gamma<C_\Gamma<\infty$, depending
only on $\Gamma$ and on the fixed value of $\eta$, with the following property.
In a stage of the master construction, suppose
\begin{equation}\label{eq:T-endpoint}
        T_k=\left\lceil \frac{\Gamma}{\lambda_k}\right\rceil
\end{equation}
and suppose $\lambda_k$ is chosen so small that
\begin{equation}\label{eq:T-dominates-past}
        T_k\ge \log(2+P_{k,1}+V_{k-1}^{\max}+B_k).
\end{equation}
Then
\begin{equation}\label{eq:loglogQ-asymp}
        \frac{c_\Gamma}{\lambda_k}\le \log\log Q_k\le \frac{C_\Gamma}{\lambda_k}.
\end{equation}
\end{lemma}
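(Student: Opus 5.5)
Since $Q_k=2^{U_k+L_kD_k+d_k+2}$, we have $\log Q_k\asymp U_k+L_kD_k+d_k$. The plan is to show that this exponent lies between $e^{c T_k}$ and $e^{C T_k}$, and then to use $T_k\asymp\Gamma/\lambda_k$.

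\emph{Upper bound.} First, \cref{lem:length-recursion} gives $\ell_{k,t}\le 1+N_k^*\le(1+P_{k,1})C_\eta^{T_k}$. Hence $L_k\le 8(1+P_{k,1})C_\eta^{T_k}$. Next, \eqref{eq:dk-choice} gives
\[
d_k\le 8+\log_2\bigl(B_k^2L_k/\lambda_k\bigr).
\]
Here $B_k\le e^{T_k}$ by \eqref{eq:T-dominates-past}, and $\lambda_k^{-1}\le T_k/\Gamma\le T_k$. So $d_k=O(T_k)$ after absorbing constants; this uses $T_k\ge1$ and that $\log(1+P_{k,1})\le T_k$. Similarly $U_k=V_{k-1}^{\max}+1\le e^{T_k}+1$. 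Therefore
\[
\log Q_k\ll U_k+L_k(d_k+2)+d_k\le e^{C'T_k},
\]
again using $1+P_{k,1}\le e^{T_k}$ from \eqref{eq:T-dominates-past}. Taking logarithms gives $\log\log Q_k\le C''T_k\le C''(\Gamma+1)/\lambda_k$, since $\lambda_k\le1$.

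\emph{Lower bound.} The recursion \eqref{eq:length-recursion} gives $P_{k,t+1}+1\ge(1+\eta^{-1})(P_{k,t}+1)$, since $\ell_{k,t}\ge\eta^{-1}(P_{k,t}+1)$. So $\ell_{k,T_k}\ge\eta^{-1}(1+\eta^{-1})^{T_k-1}$, and hence $L_k\ge 21^{T_k-1}$. Then $\log Q_k\ge L_kD_k\log2\ge 21^{T_k-1}$, which gives $\log\log Q_k\ge (T_k-1)\log 21\gg T_k$ once $T_k\ge2$. This holds since $\lambda_k$ is small; alternatively, $T_k\ge \log(2+B_0)>2$ by \eqref{eq:T-dominates-past}. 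Finally $T_k\ge\Gamma/\lambda_k$, so $\log\log Q_k\ge c_\Gamma/\lambda_k$.

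The main obstacle is bookkeeping in the upper bound. One must check that every quantity inherited from earlier stages ($P_{k,1}$, $V_{k-1}^{\max}$, $B_k$), together with the factor $\lambda_k^{-1}$ inside $d_k$, is at most $e^{O(T_k)}$. Hypothesis \eqref{eq:T-dominates-past} handles the first three directly, and $\lambda_k^{-1}\le T_k$ handles the last. After that, all terms combine into a single exponential $e^{O(T_k)}$. The constants depend only on $\eta$ and $\Gamma$, as the statement claims.
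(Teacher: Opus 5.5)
Your proposal is correct and follows essentially the same route as the paper. The lower bound comes from the geometric growth $P_{k,t+1}+1\ge(1+\eta^{-1})(P_{k,t}+1)$, which forces $\log L_k\gg T_k$; the upper bound combines \cref{lem:length-recursion}, hypothesis \eqref{eq:T-dominates-past}, and $\log(1/\lambda_k)\le T_k$ to show that the exponent of $Q_k$ is at most $e^{O(T_k)}$.
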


\begin{proof}
Throughout the proof the constants $c,C$ may change from line to line, but
depend only on $\Gamma$ and on the fixed value of $\eta$.

The lower bound follows from the length recursion.  Indeed,
\[
        P_{k,t+1}+1=P_{k,t}+\ell_{k,t}+1
        \ge (1+\eta^{-1})(P_{k,t}+1),
\]
so the trial lengths, and hence $L_k=8\max_t\ell_{k,t}$, grow exponentially in
$T_k$.  Thus
\[
        \log L_k\ge cT_k.
\]
Writing
\[
        Q_k=2^{E_k},
        \qquad E_k=U_k+L_kD_k+d_k+2,
\]
we have $E_k\ge L_k$, since $D_k\ge1$ and $U_k,d_k\ge0$.  Therefore
\[
        \log\log Q_k=\log(E_k\log2)\ge \log L_k-O(1)\ge cT_k.
\]
Since $T_k=\lceil\Gamma/\lambda_k\rceil$, this gives
\[
        \log\log Q_k\ge \frac{c_\Gamma}{\lambda_k}.
\]

For the upper bound, \cref{lem:length-recursion} gives
\[
        L_k\le C(1+P_{k,1})C_\eta^{T_k}.
\]
By \eqref{eq:T-dominates-past},
\[
        1+P_{k,1}\le e^{T_k},
        \qquad B_k\le e^{T_k},
        \qquad U_k=V_{k-1}^{\max}+1\le e^{T_k},
\]
and hence $L_k\le e^{CT_k}$.  From the choice of $d_k$,
\[
        2^{d_k}<128\frac{B_k^2L_k}{\lambda_k},
\]
so
\[
        d_k\le C\bigl(1+\log B_k+\log L_k+\log(1/\lambda_k)\bigr)\le CT_k.
\]
Here we used $\log(1/\lambda_k)\le 1/\lambda_k\le T_k/\Gamma$.  Thus
$D_k=d_k+2\le CT_k$, and
\[
        E_k=U_k+L_kD_k+d_k+2
        \le e^{T_k}+e^{CT_k}CT_k+CT_k+2
        \le e^{CT_k}.
\]
Consequently
\[
        \log\log Q_k=\log(E_k\log2)\le CT_k
        \le \frac{C_\Gamma}{\lambda_k},
\]
again using $T_k=\lceil\Gamma/\lambda_k\rceil$ and $0<\lambda_k\le1$.
\end{proof}

The next proposition isolates the global Fourier-tail summation used at the
endpoint.

\begin{proposition}[Endpoint Fourier-tail summation]\label{prop:endpoint-tail}
Suppose that the master construction satisfies, for all sufficiently large $k$,
\begin{equation}\label{eq:lambda-Q-regular}
        \lambda_k\ll\frac{1}{\log\log Q_k},
\end{equation}
\begin{equation}\label{eq:future-lambda-tail}
        \sum_{i>k}\lambda_i\ll\lambda_{k+1},
\end{equation}
and
\begin{equation}\label{eq:weighted-Q-separation}
        \lambda_kQ_k\ge 2^k\left(1+\sum_{i<k}\lambda_iQ_i\right).
\end{equation}
Assume also that $Q_k$ is eventually strictly increasing.  Then the function
$f=\sum_kF_k$ satisfies
\begin{equation}\label{eq:endpoint-tail-prop}
        \norm{f-S_Nf}_2^2\ll \frac{1}{\log\log N}
\end{equation}
for all sufficiently large $N$.
\end{proposition}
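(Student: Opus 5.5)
Fix a large $N$ and let $k$ be the index with $Q_k\le N<Q_{k+1}$; since $Q_k$ is eventually strictly increasing and tends to infinity, such $k$ exists and is unique once $N$ is large. The valuation sets $\mathcal V_i$ are pairwise disjoint, so the blocks $F_i$ are orthogonal. Moreover, $I-S_N$ acts on each $F_i$ by removing Fourier coefficients, and the supports of the $\widehat F_i$ are disjoint. Hence
\[
        \norm{f-S_Nf}_2^2=\sum_i\norm{(I-S_N)F_i}_2^2 ,
\]
and we split this sum into the ranges $i\le k$ and $i>k$.

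\emph{Future blocks.} For $i>k$ we use the trivial bound $\norm{(I-S_N)F_i}_2^2\le\lambda_i$. By \eqref{eq:future-lambda-tail} the total is $\ll\lambda_{k+1}$. Then \eqref{eq:lambda-Q-regular} gives $\lambda_{k+1}\ll1/\log\log Q_{k+1}\le 1/\log\log N$, because $N<Q_{k+1}$.

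\emph{Past and current blocks.} For $i\le k$ we have $N\ge Q_k\ge Q_i$ for $i$ large. For the finitely many small $i$ we can also take $N$ large enough that $N\ge Q_i$. So \eqref{eq:block-tail-above-Q} applies to every $i\le k$ and gives
\[
        \sum_{i\le k}\norm{(I-S_N)F_i}_2^2\le \frac{C_2}{N}\sum_{i\le k}\lambda_iQ_i .
\]
By \eqref{eq:weighted-Q-separation}, $\sum_{i<k}\lambda_iQ_i\le 2^{-k}\lambda_kQ_k$, so the sum is at most $2\lambda_kQ_k$ and this range contributes $\ll\lambda_kQ_k/N$. Since $N\ge Q_k$, this is at most $\lambda_k\ll1/\log\log Q_k$. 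That bound alone is not enough, because $\log\log Q_k$ can be much smaller than $\log\log N$ when $N$ is far above $Q_k$.

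\emph{Main obstacle.} The remaining step is to show $\lambda_kQ_k/N\ll 1/\log\log N$ uniformly for $N\in[Q_k,Q_{k+1})$. We rescale. Write $N=sQ_k$ with $s\ge1$. Then $\log\log N\le\log\log Q_k+\log(1+\log s/\log Q_k)\le \log\log Q_k+\log(1+\log s)$, so
\[
        \frac{\lambda_k}{s}\,\log\log N\ll \frac{1}{s}\Bigl(1+\lambda_k\log(1+\log s)\Bigr)\ll1
\]
uniformly in $s\ge1$, using $\lambda_k\log\log Q_k\ll1$ and $\lambda_k\le1$. Hence $\lambda_kQ_k/N\ll1/\log\log N$. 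Adding the two ranges gives \eqref{eq:endpoint-tail-prop}. If the elementary inequality for $\log\log(sQ_k)$ causes trouble, an alternative is to treat $s\le\log Q_k$, where $\log\log N\le 2\log\log Q_k$, separately from $s>\log Q_k$, where the $1/s$ decay dominates $\log\log N\le 2\log s$.
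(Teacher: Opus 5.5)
Your proposal is correct and follows the paper's proof: you use the same orthogonal splitting at the index $k$ with $Q_k\le N<Q_{k+1}$, the same use of \eqref{eq:weighted-Q-separation} for the past and current blocks, and the same bound via \eqref{eq:future-lambda-tail} for the future blocks. The only difference is in the last step: you rescale $N=sQ_k$ and bound $\frac{1}{s}\bigl(1+\log(1+\log s)\bigr)$, whereas the paper notes that $N/\log\log N$ is increasing for large $N$, which gives $Q_k/N\le\log\log Q_k/\log\log N$ in one line.
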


\begin{proof}
For each $k$, \cref{lem:block-basic} gives
\begin{equation}\label{eq:rho-k-tail}
        \rho_k(N)^2\coloneqq \norm{(I-S_N)F_k}_2^2
        \ll \lambda_k\min\left(1,\frac{Q_k}{N}\right).
\end{equation}
The valuation sets $\mathcal V_k$ are disjoint, and applying $I-S_N$ preserves
this disjointness.  Parseval therefore gives
\begin{equation}\label{eq:orth-tail-sum}
        \norm{f-S_Nf}_2^2=\sum_k\rho_k(N)^2.
\end{equation}

The finitely many initial blocks only contribute $O(1/N)$ to the squared tail,
which is $O((\log\log N)^{-1})$ for large $N$.  We therefore ignore them.
Since $Q_k$ is eventually strictly increasing, we may fix $N$ large and choose
$k$ with $Q_k\le N<Q_{k+1}$.  The past and current
blocks satisfy, by \eqref{eq:rho-k-tail} and \eqref{eq:weighted-Q-separation},
\[
        \sum_{i\le k}\rho_i(N)^2
        \ll \frac{1}{N}\sum_{i\le k}\lambda_iQ_i
        \ll \frac{\lambda_kQ_k}{N}.
\]
Since $N/\log\log N$ is increasing for large $N$ and $N\ge Q_k$,
\[
        \frac{Q_k}{N}\le \frac{\log\log Q_k}{\log\log N}.
\]
Together with \eqref{eq:lambda-Q-regular}, this yields
\[
        \sum_{i\le k}\rho_i(N)^2\ll \frac{1}{\log\log N}.
\]
For future blocks, \eqref{eq:rho-k-tail} and \eqref{eq:future-lambda-tail} give
\[
        \sum_{i>k}\rho_i(N)^2\le\sum_{i>k}\lambda_i
        \ll\lambda_{k+1}
        \ll \frac{1}{\log\log Q_{k+1}}
        \le \frac{1}{\log\log N},
\]
where the last inequality uses $N<Q_{k+1}$.  Combining the two estimates proves
\eqref{eq:endpoint-tail-prop}.
\end{proof}

\hypertarget{proof.thm.endpoint}{}
\begin{proof}[Proof of \cref{thm:endpoint}]
Choose a nondecreasing sequence $B_k\ge B_0$ such that $B_k\to\infty$ and
\begin{equation}\label{eq:B-divergent}
        \sum_{k=1}^{\infty}\frac{1}{B_k^2}=\infty;
\end{equation}
for instance $B_k=B_0+\sqrt{\log(k+2)}$.  Fix an absolute constant
$\Gamma\ge1$.  We construct the stages recursively.
At stage $k$, after the previous stages have been fixed, choose $\lambda_k>0$
so small that
\begin{align}
        &\lambda_k\le 2^{-k},                                            \label{eq:end-lambda-sum}\\
        &\lambda_k\le 2^{-k-2}\lambda_{k-1}\quad(k\ge2),                 \label{eq:end-lambda-tail-cond}\\
        &C_r\lambda_kB_k^{r-2}\le 2^{-k}
          \quad(2\le r\le k),                                             \label{eq:end-lp-diagonal}\\
        &T_k\coloneqq \left\lceil \frac{\Gamma}{\lambda_k}\right\rceil
          \ge \log(2+P_{k,1}+V_{k-1}^{\max}+B_k),                        \label{eq:end-T-dominates}\\
        &\lambda_kQ_k\ge 2^k\left(1+\sum_{i<k}\lambda_iQ_i\right),        \label{eq:end-weighted-sep}
\end{align}
where $Q_k$ is the threshold produced by the stage built with these parameters,
and $C_r$ is the constant from \cref{lem:block-lp}.
To see that such a choice is possible, temporarily build the candidate stage for
each small value of $\lambda$.  Then $T=\lceil\Gamma/\lambda\rceil\to\infty$,
and the length recursion gives $L(\lambda)\ge c_1a^T$ for constants
$a>1$ and $c_1>0$ depending only on $\eta$ and the fixed past.  Since
$Q(\lambda)\ge2^{L(\lambda)}$, we have
$\lambda Q(\lambda)\to\infty$ as $\lambda\downarrow0$.  All upper bound
conditions above hold for all sufficiently small $\lambda$, and the weighted
separation condition holds after making $\lambda$ smaller if necessary.

The sequence $(\lambda_k)$ is summable.  Hence \cref{prop:master} constructs a
mean-zero $f\in L^2(\T)$ and a dyadic lacunary sequence.  In fact, $f$ belongs
to every finite $L^r$ space.  Indeed, fix an integer $r\ge2$.  Then the finitely
many blocks with $k<r$ are bounded, and for $k\ge r$, \cref{lem:block-lp} and
\eqref{eq:end-lp-diagonal} give $\norm{F_k}_r^r\le2^{-k}$.  Since
$\sum_k\lambda_k<\infty$, \cref{lem:lp-series} shows convergence in
$L^r(\T)$.  Interpolation on the probability space $\T$ then gives
$f\in\bigcap_{1\le p<\infty}L^p(\T)$.

By \cref{lem:endpoint-scale}, condition \eqref{eq:lambda-Q-regular} holds.  The
geometric decay \eqref{eq:end-lambda-tail-cond} gives
\eqref{eq:future-lambda-tail}, and \eqref{eq:end-weighted-sep} is exactly
\eqref{eq:weighted-Q-separation}.  Moreover, for $k\ge2$,
\eqref{eq:end-weighted-sep} gives
$\lambda_kQ_k\ge2^k\lambda_{k-1}Q_{k-1}$, while
\eqref{eq:end-lambda-tail-cond} gives
$\lambda_k\le2^{-k-2}\lambda_{k-1}$.  Hence
$Q_k\ge2^{2k+2}Q_{k-1}$, so the thresholds are strictly increasing from
stage $2$ onward.  Therefore \cref{prop:endpoint-tail} gives
\[
        \norm{f-S_Nf}_2\ll (\log\log N)^{-1/2}.
\]

It remains to prove the divergent limsup.  By \cref{lem:trial-prob-master},
\[
        \Pr(S_k)
        \ge 1-\exp\left(-c_0T_k\frac{\lambda_k}{B_k^2}\right)
        \ge \frac{c}{B_k^2},
\]
with $c>0$ depending only on $c_0$, $\Gamma$, and $B_0$.  The last inequality
uses $T_k\lambda_k\ge\Gamma$ and the elementary bound
$1-e^{-a/u}\ge c_a/u$ for $u\ge B_0^2$.  By \eqref{eq:B-divergent} and the
independence of the stage events, the second Borel--Cantelli lemma gives that
$S_k$ occurs for infinitely many $k$ almost surely.  Along each such stage,
choose a successful trial $t=t(k)$.  Then \cref{prop:master} gives
\[
        \frac{1}{N_{k,t}}\sum_{j\le N_{k,t}}f(n_jx)
        \ge B_k-\mu.
\]
The corresponding endpoints tend to infinity, because each stage inserts at
least one new exponent and $N_{k,t}\ge P_{k,1}=N_{k-1}^*$ for $k\ge2$.
Since $B_k\to\infty$, the limsup is $+\infty$ almost surely.
\end{proof}

\section{Consequences for Fourier-tail problems}\label{sec:fourier-conseq}

We first show that the endpoint theorem implies the bad-modulus theorem stated above.

\begin{lemma}[Endpoint domination of admissible moduli]\label{lem:admissible-dominates}
If $\omega$ is admissible in the sense of \cref{def:admissible}, then for all
sufficiently large $N$,
\begin{equation}\label{eq:admissible-dominates-endpoint}
        (\log\log N)^{-1/2}\ll_\omega \omega(N).
\end{equation}
\end{lemma}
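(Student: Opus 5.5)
The idea is to invert the relation between $A$ and $N$ in \eqref{eq:admissible}. For large $N$, choose $A=A(N)$ with
\[
        N=\exp\bigl(\exp(2A\log A)\bigr),
\]
that is, $2A\log A=\log\log N$. The function $A\mapsto 2A\log A$ is continuous and strictly increasing for $A\ge1$, and it tends to infinity. So for every $N$ with $\log\log N$ large enough there is a unique such $A\ge1$, and $A\to\infty$ as $N\to\infty$. Admissibility then says directly that $A^{1/2}\omega(N)\to\infty$. In particular, for all sufficiently large $N$,
\[
        \omega(N)\ge A^{-1/2}.
\]

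The remaining step is to compare $A^{-1/2}$ with $(\log\log N)^{-1/2}$. Since $\log A\ge 1/2$ once $A\ge e^{1/2}$, we have $\log\log N=2A\log A\ge A$. Hence
\[
        A^{-1/2}\ge(\log\log N)^{-1/2},
\]
and so $(\log\log N)^{-1/2}\le\omega(N)$ for large $N$, which is \eqref{eq:admissible-dominates-endpoint} with implied constant $1$.

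This argument does not use the monotonicity of $\omega$. Monotonicity would be needed only if one wanted $A$ restricted to a discrete set, or wanted to change the constant $2$. A continuum parameter avoids both. The main subtlety is that admissibility must be read as a limit over real $A\to\infty$; since $A$ ranges continuously, every large $N$ is exactly of the form $\exp(\exp(2A\log A))$. If the definition were read along integer $A$ only, I would instead take $A=\lceil A(N)\rceil$. Then $N\le\exp(\exp(2A\log A))$, and because $\omega$ is decreasing, $\omega(N)\ge\omega(\exp(\exp(2A\log A)))\ge A^{-1/2}$. The extra factor from rounding up is $A(N)+1\le 2\log\log N$, which only changes the implied constant.
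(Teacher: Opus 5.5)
Your proof is correct and takes essentially the same route as the paper: write $N$ on the double-exponential scale in $A$, apply admissibility at that scale, and compare $A^{-1/2}$ with $(\log\log N)^{-1/2}$. The only difference is the parametrization. The paper uses integer blocks $\exp(\exp(A\log A))\le N<\exp(\exp((A+1)\log(A+1)))$ and needs monotonicity of $\omega$ to reach the admissibility point $\exp(\exp(2A\log A))$, much as your integer fallback does, whereas your main line solves $2A\log A=\log\log N$ exactly over real $A$ and so needs no monotonicity.
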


\begin{proof}
Let $M_A=\exp(\exp(A\log A))$ for integers $A$ large.  If
$M_A\le N<M_{A+1}$, then, since $\omega$ is decreasing,
\[
        \omega(N)\ge \omega(M_{A+1})
        \ge \omega\!\left(\exp(\exp(2A\log A))\right)
\]
for all large $A$.  By admissibility, the right-hand side is
larger than any fixed constant multiple of $A^{-1/2}$ for all sufficiently large
$A$.  On the other hand $N\ge M_A$, so
\[
        (\log\log N)^{-1/2}\le (A\log A)^{-1/2}=o(A^{-1/2}).
\]
This proves \eqref{eq:admissible-dominates-endpoint}.
\end{proof}

\hypertarget{proof.cor.admissible}{}
\begin{proof}[Proof of \cref{cor:admissible}]
Apply \cref{thm:endpoint}.  The tail estimate
\[
        \norm{f-S_Nf}_2\ll (\log\log N)^{-1/2}
\]
is bounded by $C_\omega\omega(N)$ for all large $N$ by
\cref{lem:admissible-dominates}.  The divergent-limsup conclusion is unchanged.
\end{proof}

\hypertarget{proof.cor.996}{}
\begin{proof}[Proof of \cref{cor:996}]
For every $C>0$, $(\log\log N)^{-1/2}=o((\log\log\log N)^{-C})$.  Thus
\cref{thm:endpoint} gives the required Fourier-tail bound and divergent
lacunary averages.
\end{proof}

\hypertarget{proof.cor.matsuyama}{}
\begin{proof}[Proof of \cref{cor:matsuyama}]
If $0<c\le1/2$, then
\[
        (\log\log N)^{-1/2}\le (\log\log N)^{-c}
\]
for all large $N$.  The result follows from \cref{thm:endpoint}.
\end{proof}

\section{Large partial sums in finite \texorpdfstring{$L^p$}{Lp}}\label{sec:large-sums}

We now prove \cref{thm:large-partial-sums}.  The Fourier tail is no longer part
of the problem, so the stage parameters can be chosen to make failure
summable.  For a fixed finite $L^p$ target, we take the squared $L^2$ cost
$\lambda_k$ to be a negative power of the desired signal height $B_k$.  This
keeps the $L^p$ costs summable while leaving enough room for the signal to beat
the logarithmic scale.

\hypertarget{proof.thm.large.partial.sums}{}
\begin{proof}[Proof of \cref{thm:large-partial-sums}]
Fix $2\le p<\infty$.  Choose a summable positive sequence $(a_k)$, for instance
$a_k=2^{-k-2}$.  The number $a_k$ will be the $L^p$ budget spent by stage $k$.

We construct the stages recursively.  Suppose stages $1,\ldots,k-1$ have already
been fixed.  We leave the signal height $B_k\ge B_0$ temporarily free, and once
a value of $B_k$ is proposed we set
\begin{equation}\label{eq:lambda-large-p}
        \lambda_k=a_kB_k^{-(p-2)}.
\end{equation}
Thus
\[
        \lambda_kB_k^{p-2}=a_k,
\]
which is exactly the normalization that will make the $L^p$ costs summable.
Since $B_0\ge1$, we also have $0<\lambda_k\le a_k\le1$.

Next choose the number of trials by
\begin{equation}\label{eq:T-large-sums}
        T_k=\left\lceil \Gamma \frac{B_k^2}{\lambda_k}\log(k+1)\right\rceil
        =\left\lceil \Gamma \frac{B_k^p}{a_k}\log(k+1)\right\rceil,
\end{equation}
where $\Gamma\ge 8/c_0$ is fixed.  This choice is calibrated so that the
quantity $T_k\lambda_k/B_k^2$ is a large multiple of $\log(k+1)$; later this
will make the probability that all stage-$k$ trials fail summable in $k$.

We first record how large the stage endpoint $N_k^*$ can be as a function of
$B_k$.  By \cref{lem:length-recursion},
\[
        1+N_k^*\le (1+P_{k,1})C_\eta^{T_k}.
\]
Here $P_{k,1}$ is already fixed when stage $k$ begins.  Hence
\begin{equation}\label{eq:large-logN-bound}
        \log N_k^*
        \le C_k+C T_k
        \le C'_k\frac{B_k^p}{a_k}\log(k+1),
\end{equation}
where the constants may depend on the earlier stages, on $\eta$, and on the
fixed choices of $p$ and $\Gamma$, but not on the proposed value of $B_k$.

Let
\[
        \varepsilon_k=\frac{1}{2p(k+2)}.
\]
Then $1/p-\varepsilon_k>0$.  From \eqref{eq:large-logN-bound},
\[
        (\log N_k^*)^{1/p-\varepsilon_k}
        \ll_k
        B_k^{1-p\varepsilon_k}
        a_k^{-1/p+\varepsilon_k}
        (\log(k+1))^{1/p-\varepsilon_k}.
\]
Therefore
\[
        \frac{B_k}{(\log N_k^*)^{1/p-\varepsilon_k}}
        \gg_k
        B_k^{p\varepsilon_k}
        a_k^{1/p-\varepsilon_k}
        (\log(k+1))^{-1/p+\varepsilon_k}.
\]
For the fixed stage $k$, all factors except $B_k^{p\varepsilon_k}$ are fixed,
and $p\varepsilon_k>0$.  Hence
\[
        \frac{B_k}{(\log N_k^*)^{1/p-\varepsilon_k}}
        \longrightarrow\infty
        \qquad\text{as } B_k\to\infty.
\]

We must also choose $B_k$ large enough to dominate the eventual lower-floor
constant $\mu$ from \cref{prop:master}.  For a proposed value of $B_k$, define
the deterministic upper bound
\begin{equation}\label{eq:mu-k-plus}
        \overline\mu_k(B_k)
        =C_3\sum_{i<k}\frac{\lambda_i}{B_i}
        +C_3\frac{a_k}{B_k^{p-1}}
        +C_3\sum_{i>k}\frac{a_i}{B_0^{p-1}}.
\end{equation}
The first sum is the contribution of stages already chosen.  The middle term is
the contribution of the current stage, since
\[
        \frac{\lambda_k}{B_k}
        =
        \frac{a_k}{B_k^{p-1}}.
\]
The final sum bounds all future contributions, because every later stage will
satisfy $B_i\ge B_0$ and hence
\[
        \frac{\lambda_i}{B_i}
        =
        \frac{a_i}{B_i^{p-1}}
        \le
        \frac{a_i}{B_0^{p-1}}.
\]
Thus, after all future stages have been chosen, the constant $\mu$ in
\cref{prop:master} will satisfy
\[
        \mu\le \overline\mu_k(B_k).
\]
For fixed $k$, the quantity $\overline\mu_k(B_k)$ stays bounded as
$B_k\to\infty$, and in fact converges to a finite limit, since only the current
term depends on $B_k$.

Combining this boundedness with the divergence above, we may now choose $B_k$
so large that
\begin{equation}\label{eq:B-large-choice}
        \frac{B_k-\overline\mu_k(B_k)}
        {(\log N_k^*)^{1/p-\varepsilon_k}}
        \ge k.
\end{equation}
We then freeze this value of $B_k$ and complete stage $k$ with the parameters
defined by \eqref{eq:lambda-large-p} and \eqref{eq:T-large-sums}.  This completes
the recursive construction of all stages.

The resulting function belongs to $L^p(\T)$.  Indeed,
\[
        \sum_k\lambda_k
        \le
        \sum_k a_k
        <\infty,
\]
so the $L^2$ summability hypothesis of \cref{prop:master} holds.  Moreover,
\cref{lem:block-lp} gives
\[
        \norm{F_k}_p^p
        \le C_p\lambda_kB_k^{p-2}
        = C_pa_k,
\]
and therefore $\sum_k\norm{F_k}_p^p<\infty$.  Hence
\cref{lem:lp-series} gives convergence in $L^p(\T)$, and the master proposition
applies to the constructed function and lacunary sequence.

It remains to show that the successful stages occur eventually almost surely.
By \cref{lem:trial-prob-master} and \eqref{eq:T-large-sums},
\[
        \Pr(S_k^c)
        \le \exp\left(-c_0T_k\frac{\lambda_k}{B_k^2}\right)
        \le \exp\left(-c_0\Gamma\log(k+1)\right)
        \le (k+1)^{-8}.
\]
The failure probabilities are summable.  Therefore \cref{prop:master} implies
that, for almost every $x$, every sufficiently large stage $k$ has some trial
endpoint $N_{k,t}\le N_k^*$ such that
\[
        \frac{1}{N_{k,t}}
        \sum_{j\le N_{k,t}}f(n_jx)
        \ge B_k-\mu.
\]

Fix such an $x$ and such a large $k$.  Since $N_{k,t}\le N_k^*$ and
$1/p-\varepsilon_k>0$,
\[
        (\log N_{k,t})^{1/p-\varepsilon_k}
        \le
        (\log N_k^*)^{1/p-\varepsilon_k}.
\]
Also $\mu\le\overline\mu_k(B_k)$ by the construction of
$\overline\mu_k(B_k)$.  Hence \eqref{eq:B-large-choice} gives
\[
        \frac{\sum_{j\le N_{k,t}}f(n_jx)}
        {N_{k,t}(\log N_{k,t})^{1/p-\varepsilon_k}}
        \ge
        \frac{B_k-\mu}{(\log N_k^*)^{1/p-\varepsilon_k}}
        \ge
        \frac{B_k-\overline\mu_k(B_k)}
        {(\log N_k^*)^{1/p-\varepsilon_k}}
        \ge k.
\]
Thus, along one trial endpoint from every sufficiently large stage, the
normalized partial sums with exponent $1/p-\varepsilon_k$ are at least $k$.

Now fix any $\varepsilon>0$.  Since $\varepsilon_k\to0$, we have
$\varepsilon_k<\varepsilon$ for all sufficiently large $k$.  For such $k$,
\[
        1/p-\varepsilon
        \le
        1/p-\varepsilon_k,
\]
so replacing $\varepsilon_k$ by $\varepsilon$ only decreases the logarithmic
denominator.  Therefore the same endpoints satisfy
\[
        \frac{\sum_{j\le N_{k,t}}f(n_jx)}
        {N_{k,t}(\log N_{k,t})^{1/p-\varepsilon}}
        \ge k
\]
for all sufficiently large $k$.  The endpoints tend to infinity, since each
stage appends at least one new exponent.  This proves
\eqref{eq:large-partial-sums}.

Finally take $p=2$.  For any fixed $0<\varepsilon<1/2$,
\[
        \frac{(\log N)^{1/2-\varepsilon}}{\sqrt{\log\log N}}
        \longrightarrow\infty.
\]
Thus, if the partial sums were $o(N\sqrt{\log\log N})$ almost everywhere, then
the normalized quantities in \eqref{eq:large-partial-sums} would tend to $0$
for this choice of $\varepsilon$, contradicting the infinite limsup.  Hence the
proposed $o(N\sqrt{\log\log N})$ bound in Erd\H{o}s Problem \#995 cannot hold.
\end{proof}
\begin{remark}[The matching elementary upper exponent]\label{rem:lp-upper}
For any increasing sequence $(n_j)$ and any $f\in L^p(\T)$, $1\le p<\infty$,
one has the almost-everywhere upper bound
\[
        \sum_{j\le N}f(n_jx)=O_{f,p,\varepsilon}
        \bigl(N(\log N)^{1/p+\varepsilon}\bigr)
        \qquad (\varepsilon>0).
\]
Indeed, for $2^m\le N<2^{m+1}$,
\[
        \max_{2^m\le N<2^{m+1}}\left|\sum_{j\le N}f(n_jx)\right|
        \le \sum_{j<2^{m+1}}|f(n_jx)|.
\]
The $L^p$ norm of the right-hand side is at most $2^{m+1}\norm{f}_p$.
Chebyshev's inequality gives exceptional sets of measure
$O(m^{-1-p\varepsilon})$ after normalizing by
$2^m m^{1/p+\varepsilon}$, and Borel--Cantelli completes the argument.  Thus
\cref{thm:large-partial-sums} is sharp in the logarithmic exponent up to the
usual $\varepsilon$ gap.
\end{remark}

\section{A bounded dyadic hitting-set construction}\label{sec:bounded}

We prove \cref{thm:bounded}.  The proof is independent of the unbounded
spike blocks, but it uses the same stage-and-trial geometry.  At stage
$k$ we build a small set $E_k$.  A central dyadic hit then forces every point in
a trial block to land in $E_k$.

\hypertarget{proof.thm.bounded}{}
\begin{proof}[Proof of \cref{thm:bounded}]
Fix $0<\varepsilon<1$.  Choose integers $A_k\ge4$ such that
\begin{equation}\label{eq:bounded-A-sum}
        \sum_{k=1}^{\infty}A_k^{-1}<\varepsilon.
\end{equation}
Let $\theta_k=(k+1)^{-1}$.  Put $c_1=7/32$, and choose $C\ge8/c_1$.

Let $P_{k,1}$ be the number of selected exponents before stage $k$, with
$P_{1,1}=0$.  At stage $k$ set
\begin{equation}\label{eq:bounded-T}
        T_k=\lceil CA_k\log(k+1)\rceil.
\end{equation}
Starting from $P_{k,1}$, define
\begin{equation}\label{eq:bounded-lengths}
        \ell_{k,t}=\lceil\theta_k^{-1}(P_{k,t}+1)\rceil,
        \qquad P_{k,t+1}=P_{k,t}+\ell_{k,t}.
\end{equation}
Then $P_{k,t}\le\theta_k\ell_{k,t}$.  Put
\[
        L_k=8\max_{1\le t\le T_k}\ell_{k,t},
\]
choose $d_k$ so that
\[
        A_kL_k\le2^{d_k}<2A_kL_k,
\]
and set $D_k=d_k+2$.  Define
\begin{equation}\label{eq:bounded-Ek}
        E_k=\bigcup_{q=1}^{L_k}
        \{y\in\T:\{2^{qD_k}y\}<2^{-d_k}\}.
\end{equation}
Then $|E_k|\le L_k2^{-d_k}\le A_k^{-1}$.  Let
\[
        E=\bigcup_{k=1}^{\infty}E_k.
\]
By \eqref{eq:bounded-A-sum}, $|E|<\varepsilon$.

We now choose the exponents.  Suppose stages before $k$ have been selected and
let $m_{k-1}^{\rm last}$ be the largest previously selected exponent, with $m_0^{\rm last}=0$.
Take
\[
        M_{k,1}=m_{k-1}^{\rm last}+D_k+1,
\]
and, for $1\le t<T_k$,
\[
        M_{k,t+1}=M_{k,t}+(L_k+2)D_k+d_k+2.
\]
The $t$th trial uses exactly the $\ell_{k,t}$ exponents
\[
        \mathcal I_{k,t}=\{M_{k,t}+rD_k:1\le r\le\ell_{k,t}\}.
\]
These exponent intervals are strictly increasing.  After completing stage $k$,
set
\[
        m_k^{\rm last}=M_{k,T_k}+\ell_{k,T_k}D_k,
        \qquad
        P_{k+1,1}=P_{k,T_k+1}.
\]
Continuing inductively over all stages, let $(m_j)$ be the increasing
enumeration of the union of all intervals $\mathcal I_{k,t}$ and put $n_j=2^{m_j}$.

For a fixed trial define
\[
        \mathcal H_{k,t}=\bigcup_{h=\ell_{k,t}+1}^{L_k+1}
        \{x\in\T:\{2^{M_{k,t}+hD_k}x\}<2^{-d_k}\}.
\]
If $\mathcal H_{k,t}$ occurs, choose an $h$ witnessing this event.  For each
$1\le r\le\ell_{k,t}$, put $q=h-r$.  Then $1\le q\le L_k$ and
\[
        2^{qD_k}(2^{M_{k,t}+rD_k}x)=2^{M_{k,t}+hD_k}x,
\]
so every point of the trial lands in $E_k\subset E$.

The events in the union defining $\mathcal H_{k,t}$ have disjoint digit windows.  With
$p_k=2^{-d_k}$ and $n_{k,t}=L_k-\ell_{k,t}+1\ge7L_k/8$, we have
$p_k>1/(2A_kL_k)$ and $n_{k,t}p_k\le1/A_k\le1/2$.  Hence
\[
        \Pr(\mathcal H_{k,t})=1-(1-p_k)^{n_{k,t}}
        \ge \frac{1}{2}n_{k,t}p_k
        \ge \frac{7}{32A_k}=\frac{c_1}{A_k}.
\]
The recursion for the starts makes the events $\mathcal H_{k,t}$ independent within a
fixed stage.  Therefore
\[
        \Pr(\text{no good trial at stage }k)
        \le \left(1-\frac{c_1}{A_k}\right)^{T_k}
        \le (k+1)^{-8}.
\]
Borel--Cantelli gives that, for almost every $x$, every sufficiently large
stage has a good trial.  For such a trial,
\[
        \sum_{j\le N_{k,t}}\1_E(n_jx)\ge\ell_{k,t},
        \qquad
        N_{k,t}=P_{k,t}+\ell_{k,t}\le(1+\theta_k)\ell_{k,t}.
\]
Thus
\[
        \frac{1}{N_{k,t}}\sum_{j\le N_{k,t}}\1_E(n_jx)
        \ge \frac{1}{1+\theta_k}.
\]
Letting $k\to\infty$ along successful stages gives limsup at least $1$, and the
reverse inequality is trivial.  Hence the limsup is exactly $1$ almost
everywhere.

It remains only to justify the final mean-zero conclusion.  Put
$g=\1_E-|E|$.  If the averages
$N^{-1}\sum_{j\le N}g(n_jx)$ converged almost everywhere, their limit would be
bounded and would have integral $0$ by dominated convergence, since each dyadic
map preserves Lebesgue measure and hence each average has integral $0$.  But
for almost every $x$ the preceding paragraph gives
\[
        \limsup_{N\to\infty}\frac1N\sum_{j\le N}g(n_jx)=1-|E|>0.
\]
A convergent sequence with this limsup would have positive limit almost
everywhere, contradicting integral $0$.
\end{proof}

\section{Further questions}\label{sec:questions}

The endpoint construction leaves a more refined boundary problem.  Write
$u=\log\log N$ and consider moduli
\[
        \omega(N)=u^{-1/2}L(u),
\]
where $L$ is slowly varying.  A natural heuristic is that the positive/negative
threshold should be governed by square summability on the log-log scale,
\[
        \sum_r \omega(e^{e^r})^2<\infty,
        \quad\text{that is,}\quad
        \sum_r \frac{L(r)^2}{r}<\infty.
\]
For $L(r)=(\log r)^\beta$, this condition changes at $\beta=-1/2$.  The present
paper reaches the plain endpoint $L\equiv1$ on the negative side, but does not
attempt to identify the optimal slowly varying boundary.

The examples here are highly adversarial in the lacunary sequence: they use long
well-separated exponent blocks and rapidly growing gaps.  It remains natural to
ask what happens under structural restrictions such as two-sided lacunarity
\[
        1+\delta\le \frac{n_{j+1}}{n_j}\le \Lambda,
\]
or, in the dyadic model, bounded exponent gaps
$1\le m_{j+1}-m_j\le H$.  This lies between the present construction and the
pure geometric case covered by Raikov's theorem.

The finite-$L^p$ large-sum scale is now determined up to the standard
$\varepsilon$ gap by \cref{thm:large-partial-sums} and \cref{rem:lp-upper}.  A
remaining question is whether the endpoint exponent can be formulated without
$\varepsilon$ losses, or with optimal secondary logarithmic factors.  The case
$p=\infty$ is qualitatively different, since bounded functions have the trivial
$O(N)$ upper bound and the bounded construction in \cref{thm:bounded} is a
sweeping-out rather than a logarithmic-growth phenomenon.

Finally, bounded counterexamples of the kind in \cref{thm:bounded} must be
genuinely non-Riemann-integrable.  For each fixed increasing integer sequence
$(n_j)$, Weyl's theorem implies that $(n_jx)$ is uniformly distributed modulo
one for almost every $x$.  Hence every Riemann integrable $g$ satisfies
$N^{-1}\sum_{j\le N}g(n_jx)\to\int_\T g$ almost everywhere.  It would be
interesting to locate a sharper regularity boundary between this classical
positive behavior and measurable sweeping-out examples.

\section*{Acknowledgements}

The author used GPT-5.4 Pro during the development of this work to explore proof
strategies, test intermediate formulations, and assist with exposition.  All
mathematical arguments and claims in the final manuscript were independently
verified by the author, who takes full responsibility for the paper.
The author thanks Alyxia Seah for thoughtful comments on an earlier draft;
in particular, her suggestions led to the current improved definition of a
good trial event.

\end{document}